\newtheorem{theorem}{Theorem}[section]
\newtheorem{proposition}[theorem]{Proposition}
\newtheorem{lemma}[theorem]{Lemma}
\newtheorem*{theorem*}{Theorem}
\theoremstyle{definition}
\newtheorem{definition}[theorem]{Definition}
\newtheorem{remark}[theorem]{Remark}
\newcommand{\beq}{\begin{equation}}
\newcommand{\eeq}{\end{equation}}
\newcommand{\e}{\mathrm{e}}
\newcommand{\SO}{{\mathrm {SO}}}
\newcommand{\GL}{{\mathrm {GL}}}
\newcommand{\G}{{\mathrm G}}
\DeclareMathOperator\ad{ad}
\DeclareMathOperator\vol{vol}
\newcommand{\Ric}{{\rm Ric}}
\newcommand{\frg}{\mathfrak{g}}
\newcommand{\frh}{\mathfrak{h}}
\newcommand{\frk}{\mathfrak{k}}
\newcommand{\frn}{\mathfrak{n}}
\renewcommand{\gg}{\mathfrak{g}}
\newcommand{\gz}{\mathfrak{z}}
\newcommand{\diag}{{\rm diag}}
\numberwithin{equation}{section}
\title[A 7-dimensional nilmanifold with a non Ricci-flat Einstein pseudo-metric]{A non Ricci-flat Einstein pseudo-Riemannian \\
metric on a 7-dimensional nilmanifold}
\author{Marisa Fern\'andez}
\address{Departamento de Matem\'aticas, Facultad de Ciencia y Tecnolog\'{\i}a, Universidad del Pa\'{\i}s Vasco  (UPV / EHU), 
Apartado 644, 48080 Bilbao, Spain}
\email{marisa.fernandez@ehu.es}
\author{Marco Freibert} 
\address{Mathematisches Seminar\\
Christian-Albrechts-Universit\"at zu Kiel\\
Ludewig-Meyn-Strasse 4\\
D-24098 Kiel\\
Germany}
\email{freibert@math.uni-kiel.de}
\author{Jonatan S\'anchez}
\address{Departamento de Matem\'atica Aplicada a las TIC \\ Escuela T\'ecnica Superior de Ingenieros Inform\'aticos UPM\\
Campus de Montegancedo\\
28660 Boadilla del Monte, Madrid, Spain}
\email{jonatan.sanchez@fi.upm.es}
\subjclass[2010]{53C50, 53C25, 53C15, 22E25.}
\keywords{Einstein pseudo-Riemannian metrics, Ricci curvature, nilpotent Lie groups, $\G_2^*$-structures.}
\begin{document}
\begin{abstract} 

We answer in the affirmative the question posed by Conti and Rossi \cite{CR1, CR2} on the existence of
nilpotent Lie algebras of dimension 7 with an Einstein pseudo-metric of nonzero scalar curvature.
Indeed, we construct a left-invariant pseudo-Riemannian metric $g$ of signature $(3, 4)$ on a nilpotent Lie group of dimension 7,
such that $g$ is Einstein and not Ricci-flat.
We show that the pseudo-metric $g$ cannot be induced by any left-invariant closed $\G_2^*$-structure on the Lie group.
Moreover, some results on closed and harmonic $\G_2^*$-structures on an arbitrary 7-manifold $M$
are given. In particular, we prove that the underlying pseudo-Riemannian metric of a closed and harmonic $\G_2^*$-structure on $M$
is not necessarily Einstein, but if it is Einstein then it is Ricci-flat. 
\end{abstract}

\maketitle

\section{Introduction}\label{sec:intro}

An Einstein manifold $(M, g)$ is a pair consisting of a differentiable manifold $M$ with a
Riemannian, or pseudo-Riemannian, metric $g$ whose Ricci curvature $Ric(g)$ is a multiple of the metric $g$, i.e.
\begin{equation} \label{def:Einstein}
\Ric(g) = \lambda\, g,
\end{equation}
for some real constant $\lambda$. 
The metric $g$ is called {\em Ricci-flat} when $\lambda= 0$,  and 
$g$ is said to be 
{\em Einstein metric of nonzero scalar curvature} (or {\em non Ricci-flat Einstein metric}) when $\lambda\not= 0$.

One of the results of Milnor \cite{M} states that if $G$ is a connected and non-Abelian nilpotent 
Lie group, then for any left-invariant Riemannian metric 
on $G$ there exists a direction of strictly negative Ricci curvature and a direction of strictly positive
Ricci curvature; therefore, such a metric cannot be Einstein.

From now on, we restrict to left-invariant pseudo-Riemannian metrics on nilpotent Lie groups. Such metrics are uniquely determined by
scalar products on the associated Lie algebra, also called pseudo-metrics. The Einstein condition is also expressed by 
\eqref{def:Einstein}.

In contrast to the Riemannian case, there exist Ricci-flat pseudo-metrics on non-Abelian nilpotent Lie algebras 
(see \cite{CalvarusoZaeim:lorentzian, CalvarusoZaeim:neutral, CR1, FLu, Fr, GN, GuB, K2}). Some of these examples are not flat, indicating another difference with the Riemannian case. 
More generally, Conti and Rossi proved in \cite{CR1} that if $\frg$ is a 
nilpotent Lie algebra of dimension $\leq 6$, then
any Einstein pseudo-metric on $\frg$ is Ricci-flat.
This result is also true in dimension 7
under a certain extra condition, e.g. when the Lie algebra is {\em nice} (see \cite{CR1, CR2, LaW} and section \ref{sec:example} for details). In fact, Conti and Rossi 
\cite{CR1} excluded the existence of an Einstein non Ricci-flat pseudo-metric on all 7-dimensional nilpotent Lie algebras with the exception
of 9 Lie algebras and 2 one-parameter families;
and they provided the first example of an Einstein pseudo-metric with nonzero scalar curvature on a nilpotent Lie algebra
of dimension 8 (other examples  of such pseudo-metrics in dimension 8 and in dimension 9 are given in  \cite{CR3}).
 Then they raise the following question 
\begin{center}
{\em  Are there any nilpotent Lie algebras of dimension 7 with an Einstein pseudo-metric of nonzero scalar curvature?}
\end{center}
In this paper, we give an affirmative answer to this question, constructing an Einstein pseudo-metric $g$ of signature $(3, 4)$ 
with nonzero scalar curvature on a seven-dimensional nilpotent Lie algebra $\frg$ (Theorem \ref{th:EinsteinnotRicciflat}). 
If $G$ is the associated simply connected Lie group, the pseudo-metric $g$  on $\frg$ corresponds to a left-invariant 
pseudo-Riemannian metric on $G$, which is Einstein and non Ricci-flat. Since the structure constants are rational 
(see \eqref{def:Lie}), there exists a uniform discrete subgroup $\Gamma$ of $G$;
the quotient $\Gamma{\backslash} G$, called a nilmanifold, has an induced non Ricci-flat Einstein 
pseudo-Riemannian metric (see Remark \ref{rem:nilmfd}).

It seems now interesting to know if the Einstein pseudo-metric $g$ on $\frg$ is induced by 
some well-known geometric structure. A natural class to
look at are $\G_2^*$-structures as these induce pseudo-Riemannian metrics of signature $(3,4)$ (see section \ref{sec:closedG2} for details). 

We prove that the aforementioned Lie algebra $\frg$ has a closed $\G_2^*$-structure but 
the non Ricci-flat Einstein pseudo-metric $g$ is not determined by any closed $\G_2^*$-structure on $\frg$. Indeed, we 
show that no closed $\G_2^*$-structure on $\frg$ can induce a non Ricci-flat  Einstein pseudo-metric (see Theorem~\ref{thm:thmEinstein2}).
The proof is based on some necessary conditions that must satisfy a non Ricci-flat  Einstein pseudo-metric induced by a closed $\G_2^*$-structure on $\frg$
 (see Lemma \ref{lem:grhozero1}, Lemma \ref{lem:grhozero2}, Lemma \ref {lem:grhononzero} and Lemma \ref{lem:grhozero3}).
Theorem~\ref{thm:thmEinstein2} leads to the following question: {\em is there any 7-dimensional nilpotent Lie algebra $\frk$
 with a non Ricci-flat Einstein pseudo-metric associated to a closed $\G_2^*$-structure on $\frk$?}
 
As a consequence of Theorem \ref{thm:thmEinstein2}, and using Nomizu's theorem \cite{Nomizu}, we have that the compact 
nilmanifold $\Gamma{\backslash} G$, considereded above, has a left-invariant closed $\G_2^*$-structure, but no left-invariant 
closed $\G_2^*$-structure on $\Gamma{\backslash} G$ can induce a non Ricci-flat  Einstein left-invariant pseudo-metric. 

We remind that a $\G_2$-{\em structure} on a 7-manifold $M$ 
is characterized by the existence of a globally defined differential 3-form $\psi$ on $M$
satisfying a certain nondegeneracy condition.  Such a 3-form $\psi$ gives rise to a Riemannian metric $g_{\psi}$ and a volume form $vol_{\psi}$ on $M$.
A $\G_2$-structure $\psi$ is said to be {\em closed} if $d\psi=0$, 
while it is called {\em co-closed} if $d\star_{\psi}\psi=0$, where $\star_{\psi}$ is the Hodge operator associated to $g_{\psi}$ and the volume form $vol_{\psi}$ on $M$.  
When both of these conditions hold, the Riemannian metric $g_{\psi}$ is Ricci-flat, 
and $\mathrm{Hol}(g_{\psi})\subseteq\G_2$ (cf.~\cite{Br,FeG}).
 
In the literature, special attention has been given to the case when $g_{\psi}$ is
an Einstein metric \cite{Br, CI, FFeM2, MU}. By the results in \cite{Br, CI} no compact 7-dimensional manifold can support a closed  
$\G_2$-structure $\psi$ whose underlying metric $g_{\psi}$ is Einstein unless $g_{\psi}$ has holonomy contained in $\G_2$. 
The first author in a joint work with Fino and Manero \cite{FFeM} proved that a 7-dimensional solvmanifold cannot admit any left-invariant
closed $\G_2$-structure $\psi$ such that the induced metric $g_{\psi}$ is Einstein, unless $g_{\psi}$ is flat.
Recently, Lotay and Wei proved in \cite[Proposition 1.7]{LW} that no 7-dimensional manifold $M$
can support a closed and harmonic $\G_2$-structure $\psi$ unless 
$\psi$ is also coclosed, and so the underlying metric $g_{\psi}$ has holomomy contained in $\G_2$ and it is Ricci-flat. 

Contrary to the closed $\G_2$-structures case, we prove that 
if $\varphi$ is a closed and harmonic $\G_2^*$-structure on any 7-manifold, then the scalar curvature
of the induced pseudo-Riemannian metric $g_{\varphi}$ vanishes. So, if $g_{\varphi}$ is Einstein, $g_{\varphi}$ is  
Ricci-flat but $\varphi$ is not necessarily coclosed
(Theorem \ref{th:harmonic-Ricci}).
We discuss two examples of seven-dimensional manifolds, each of them with a closed and harmonic (but non-coclosed) $\G_2^*$-structure. 
We show that in one of those examples, the induced pseudo-metric is Ricci-flat, but the pseudo-metric  is not Einstein in the another example.

\section{The 7-dimensional nilpotent Lie algebra $\frg$}\label{sec:example}

In this section we construct an explicit non Ricci-flat Einstein pseudo-metric on a 
nilpotent Lie algebra $\frg$ of dimension 7. 

Before constructing our metric, it is worth recalling the notion of a {\em nice} nilpotent Lie algebra 
introduced by Lauret and Will in \cite{LaW}. Let $\{e_1,\dots,e_n\}$ be a basis for a nilpotent Lie algebra 
${\mathfrak{h}}$, with structural constants $a^k_{ij}$. 
We say that the basis $\{e_i\}$ is {\em nice} if the following conditions hold:
\begin{itemize}
	\item for all $i < j$ there is at most one $k$ such that $a^k_{ij}\neq 0$;
	\item if $a^k_{ij}$ and $a^k_{lm}$ are nonzero then either $\{i, j\} = \{l,m\}$ or $\{i, j\}\cap\{l,m\}=\emptyset$.
\end{itemize}
A nilpotent Lie algebra is said to be {\em nice} if it admits a nice basis. 
\begin{proposition}{\rm{(}}\cite[Theorem 4.3, Theorem 5.5]{CR1}{\rm{)}}
If $\frh$ is either a nilpotent Lie algebra of dimension $\leq 6$, or a nice nilpotent Lie algebra of dimension 7, then 
any Einstein pseudo-metric on $\frh$ is Ricci-flat.
\end{proposition}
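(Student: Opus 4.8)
The plan is to collapse the Einstein condition onto a single trace identity and to play it against the positive grading that every nilpotent Lie algebra carries. Since $\frh$ is nilpotent, each $\ad_X$ is nilpotent, so $\tr\ad_X=0$ for all $X$; thus $\frh$ is unimodular and the mean curvature vector $H$ (defined by $g(H,X)=\tr\ad_X$) vanishes. Consequently, in a pseudo-orthonormal basis $\{e_i\}$ with $g(e_i,e_j)=\ep_i\delta_{ij}$, $\ep_i=\pm1$, and structure constants $[e_i,e_j]=\sum_k a_{ij}^k e_k$, the Ricci tensor reduces to the standard nilpotent form
\[
\Ric(e_k,e_k)=-\tfrac12\sum_i\ep_i\,|[e_k,e_i]|^2+\tfrac14\sum_{i,j}\ep_i\ep_j\,(a_{ij}^k)^2,
\]
a difference of a sum over the brackets in which $e_k$ is an \emph{input} and one in which it is an \emph{output}. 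I write $S$ for the $g$-self-adjoint endomorphism with $\Ric(X,Y)=g(SX,Y)$, so the Einstein condition is $S=\l\,\Id$ and $\tr S=\Scal=n\l$ with $n=\dim\frh$.

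The core of the argument is the identity: \emph{if $D\in\Der(\frh)$ is self-adjoint with respect to $g$, then $\tr(S\,D)=0$}. To prove it I would diagonalize $D=\diag(d_1,\dots,d_n)$ in a pseudo-orthonormal basis (self-adjointness makes this possible), observe that $D$ being a derivation forces $d_i+d_j=d_k$ on every nonzero $a_{ij}^k$, and compute $\tr(S\,D)=\sum_k d_k\,\ep_k\,\Ric(e_k,e_k)$. Substituting the formula above and weighting by $d_k$, the input term contributes each $(a_{ij}^k)^2$ with weight $\tfrac12(d_i+d_j)$ and the output term with weight $d_k$; the grading relation $d_i+d_j=d_k$ makes the two contributions of every structure constant cancel, leaving $\tr(S\,D)=0$. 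Morally this is the vanishing forced by $D$ annihilating the bracket tensor, but in indefinite signature the $\ep_i$ must be carried along, which is exactly why \emph{self-adjointness} of $D$, and not merely $D\in\Der(\frh)$, is what the computation needs.

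With the identity available the conclusion is immediate once a good $D$ is found. A non-abelian nilpotent $\frh$ admits a positive grading, hence a semisimple derivation $D$ with strictly positive weights and $\tr D>0$. If this $D$ can be taken $g$-self-adjoint, then Einstein gives $\tr(S\,D)=\l\,\tr D$ while the identity gives $\tr(S\,D)=0$, and $\tr D>0$ forces $\l=0$, i.e. $g$ is Ricci-flat. For a nice Lie algebra the grading derivation (equivalently the Nikolayevsky derivation) is diagonal in the nice basis, and it is $g$-self-adjoint precisely when $g$ is diagonal in that basis; the two nice conditions then also guarantee that $S$ itself is diagonal there, so the scheme closes.

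The hard part, and the place where niceness is genuinely used, will be to show that on a nice Lie algebra (respectively in dimension $\le 6$) one may restrict the search for an Einstein metric to pseudo-metrics that are diagonal in the nice basis: automorphisms cannot diagonalize an arbitrary Gram matrix, so this reduction is a real step rather than a formality. It is exactly this reduction that has no analogue for a general $7$-dimensional nilpotent Lie algebra, consistent with the non Ricci-flat Einstein example produced in this paper, where the relevant metric admits no self-adjoint derivation of nonzero trace. For $\dim\frh\le 6$ one may instead lean on the finite classification of nilpotent Lie algebras, checking in each case that every pseudo-metric carries a $g$-self-adjoint derivation of strictly positive trace, after which the same trace identity yields $\l=0$.
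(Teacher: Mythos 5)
This proposition is quoted in the paper from \cite{CR1} without proof, so your attempt must be measured against Conti--Rossi's argument, whose engine you have correctly identified: a trace identity for the Ricci operator played against a derivation of nonzero trace. But your version of that identity is restricted in a way that breaks the proof. You establish $\tr(S\circ D)=0$ only for $g$-self-adjoint derivations, and in fact your argument needs $D$ to be diagonalizable in a pseudo-orthonormal basis, which is strictly stronger: in indefinite signature a $g$-self-adjoint operator can have Jordan blocks or non-real eigenvalues, so the parenthetical ``self-adjointness makes this possible'' is already false. Worse, the application then requires you to produce, for an \emph{arbitrary} Einstein pseudo-metric $g$, a $g$-self-adjoint derivation of positive trace; you offer no construction, you explicitly defer it (``the hard part \dots will be to show that \dots one may restrict \dots to pseudo-metrics that are diagonal in the nice basis''), and that reduction does not exist --- automorphisms cannot diagonalize an arbitrary Gram matrix, as you yourself concede. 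Your supporting claim that every non-abelian nilpotent Lie algebra admits a positive grading is also false from dimension $7$ on: characteristically nilpotent Lie algebras occur there, and the $9+2$ exceptional algebras of \cite{CR1}, including the $\frg$ of this paper, satisfy $\Der(\frg)\subseteq\mathfrak{sl}(\frg)$, i.e.\ they have no derivation of nonzero trace at all, self-adjoint or otherwise.

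The missing idea is that the trace identity needs \emph{no} compatibility between $D$ and $g$: for any pseudo-metric on a nilpotent Lie algebra and \emph{every} derivation $D$ one has $\tr(\Ric\circ D)=0$. This is the lemma behind \cite[Theorem 4.1]{CR1}, and it is proved without diagonalizing anything: writing $\mu$ for the bracket and $\theta(E)\mu=E\mu(\cdot,\cdot)-\mu(E\cdot,\cdot)-\mu(\cdot,E\cdot)$ for $E\in\mathfrak{gl}(\frh)$, a direct pseudo-orthonormal computation (formally identical to the Riemannian moment-map computation, with $g$-adjoints in place of transposes) gives $\tr(\Ric\circ E)=c\,\langle\theta(E)\mu,\mu\rangle_{g}$ for \emph{all} $E$, with a universal constant $c$, in any signature; a derivation satisfies $\theta(D)\mu=0$ by definition, so the trace vanishes. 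Your weighted-cancellation computation is exactly this identity specialized to diagonal $E$; the point is that it survives without the specialization. Einstein with $\lambda\neq 0$ then forces every derivation of $\frh$ to be traceless, and the case $\dim\frh\le 6$ follows from the classification fact that every nilpotent Lie algebra of dimension $\le 6$ admits a derivation of nonzero trace --- a statement about the algebra alone, not the metric-by-metric check proposed in your last paragraph, which is both unproved and unnecessary. Finally, even the unrestricted identity does not settle the nice seven-dimensional case: there exist nice $7$-dimensional nilpotent Lie algebras all of whose derivations are traceless, and for these \cite{CR1} needs the genuinely separate argument of their Theorem 5.5 (resting on nice-basis combinatorics and obstructions like the null-space criterion this paper quotes from \cite[Proposition 2.4]{CR1}); your sketch leaves precisely this case to the unavailable diagonal reduction.
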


Note that a classification of 7-dimensional indecomposable nilpotent Lie algebras was given
by Gong in \cite{Gong} (see also \cite{CFe}). This list contains 140 Lie algebras and 9 one-parameter families.
In addition,  there are 35 decomposable nilpotent Lie algebras (\cite{Magnin, Salamon1}).

Actually, Conti and Rossi were able to exclude the existence of an Einstein pseudo-metric  with nonzero scalar curvature for a 
wide class of 7-dimensional nilpotent Lie algebras, namely for all
but $9$ indecomposable Lie algebras and 2 one-parameter families (see \cite[Theorem 4.4]{CR1}).
These Lie algebras $\frh$ are characterized by the condition that all derivations have zero trace,
i.e. by $\mathrm{Der}(\frh)\subseteq\mathfrak{sl}(\frh)$ (see \cite[Theorem 4.1]{CR1}). One of those $9$ nilpotent  Lie algebras
is the Lie algebra $\frg$ defined as follows:
\begin{equation}\label{def:Lie}
\frg = (0,0,e^{12},e^{13},e^{14},e^{15}+e^{23},e^{16}+e^{23}+e^{24}).
\end{equation}
By this notation we mean that the dual space $\gg^\ast$ has a fixed coframe
$\{e^{1}, \dots, e^{7}\}$ such that
\begin{equation*}
\begin{aligned}
& de^{1} = de^{2} =0, \quad \,\, de^{3} =e^{12}, \quad de^{4} =e^{13}, \quad de^{5} =e^{14}, \\
&  de^{6} =e^{15}+e^{23},  \quad de^{7} =e^{16}+e^{23}+e^{24},
\end{aligned}
\end{equation*}
where $d$ denotes the Chevalley-Eilenberg differential on $\gg^\ast$, and $e^{12}$ stands for $e^1 \wedge e^{2}$, and so on.

\begin{theorem}\label{th:EinsteinnotRicciflat}
The nilpotent Lie algebra $\frg$ defined in \eqref{def:Lie} has an Einstein pseudo-metric $g$
with nonzero scalar curvature.
\end{theorem}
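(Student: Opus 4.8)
The plan is to construct an explicit pseudo-metric on $\frg$ and verify the Einstein condition directly by computing the Ricci tensor. Since $\frg$ is a nilpotent Lie algebra defined by the structure equations \eqref{def:Lie}, all the geometry is encoded algebraically, and the Ricci curvature of a left-invariant pseudo-metric can be computed purely in terms of the structure constants $a^k_{ij}$ and the metric coefficients. Concretely, I would first posit a general scalar product $g = \sum_{i\le j} g_{ij}\, e^i\odot e^j$ on $\frg$, and then exploit the lower-central-series grading and the known constraints to drastically reduce the number of free parameters. Because all derivations of $\frg$ have zero trace (this is precisely the necessary condition from \cite[Theorem~4.1]{CR1} that singles out $\frg$ as a candidate), the scalar curvature $\Scal(g)$ need not vanish, so there is genuine room for a non Ricci-flat solution.

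The key computational tool is the Koszul formula together with the standard formula for the Ricci curvature of a left-invariant metric on a Lie group. Writing $\nabla$ for the Levi-Civita connection determined by $g$ via
\begin{equation*}
2\,g(\nabla_X Y, Z) = g([X,Y],Z) - g([Y,Z],X) + g([Z,X],Y),
\end{equation*}
one obtains $\Ric(g)$ as a bilinear form in the basis $\{e_1,\dots,e_7\}$ dual to the coframe. I would set up these expressions symbolically, imposing the ansatz that $g$ be of signature $(3,4)$, and then solve the system $\Ric(g) = \lambda\, g$ for the metric coefficients and for $\lambda \ne 0$. The grading of $\frg$ suggests choosing a metric in which certain off-diagonal pairings $g(e_i,e_j)$ (pairing low-degree with high-degree elements) carry the nondegeneracy, which is a familiar phenomenon for Ricci-flat and Einstein pseudo-metrics on nilpotent Lie algebras where the standard diagonal ansatz is obstructed by Milnor's theorem \cite{M} in the Riemannian setting.

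The main obstacle I anticipate is finding the correct nondiagonal ansatz: the Einstein equations $\Ric(g)=\lambda g$ form an overdetermined nonlinear system in the $g_{ij}$, and a naive diagonal or near-diagonal choice will almost certainly force $\lambda = 0$, reproducing the Ricci-flat examples already known in the literature. The art lies in identifying which pairings to switch on so that the trace terms survive and produce a nonzero $\lambda$ while keeping the Jacobi-compatible structure constants \eqref{def:Lie} fixed. Once a candidate metric is guessed, verification is a finite (if tedious) algebraic check: compute all $\nabla_{e_i}e_j$, then $\Ric(e_i,e_j)$, and confirm the proportionality $\Ric = \lambda g$ with $\lambda \ne 0$ and that the signature is indeed $(3,4)$.

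Finally, I would record the resulting value of $\lambda$ (equivalently, the nonzero scalar curvature $\Scal = 7\lambda$) explicitly, and note that since the structure constants are rational, Mal'cev's criterion guarantees a uniform discrete subgroup $\Gamma \subset G$, so the metric descends to the compact nilmanifold $\Gamma\backslash G$ as claimed in Remark~\ref{rem:nilmfd}. I expect the proof in the paper to simply exhibit the explicit $g$ and assert the computation, since producing the right metric is the whole difficulty and the check, though lengthy, is routine.
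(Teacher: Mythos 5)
Your strategy coincides with the paper's: fix the structure constants \eqref{def:Lie}, write down a candidate scalar product, compute $\Ric(g)$ algebraically via the Koszul formula, and try to solve $\Ric(g)=\lambda\,g$ with $\lambda\neq 0$ and signature $(3,4)$. The paper even parametrizes the search essentially as you would: assuming each $e_i$ is non-isotropic, it runs a Gram--Schmidt process to get a unipotent (triangular, unit-diagonal) frame change and then looks for a metric that is \emph{diagonal} in the adapted frame $\{f_1,\dots,f_7\}$, i.e.\ $21+7=28$ unknowns, the same count as your general symmetric matrix $(g_{ij})$. Note that this partially contradicts your heuristic: the solution is not built on isotropic ``low-degree paired with high-degree'' pairings; it is diagonal, with all basis vectors non-isotropic, in the unipotently adapted frame \eqref{eq:changeofbasis}, and only looks non-diagonal when rewritten in the $e$-basis. (Also, your $\Scal=7\lambda$ is the standard convention $\Scal=\tr_g\Ric$; the paper records a differently normalized value, but all that matters for the statement is $\lambda\neq 0$.)

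The genuine gap is that your proposal never exhibits the metric, and for this theorem the explicit example \emph{is} the proof. Everything you describe -- the ansatz, the Koszul formula, the overdetermined nonlinear system, the plan to solve it and discard degenerate solutions -- applies verbatim to every $7$-dimensional nilpotent Lie algebra; yet by \cite[Theorems 4.3, 4.4 and 5.5]{CR1} that very system admits only Ricci-flat Einstein solutions for all nilpotent Lie algebras of dimension $\le 6$ and for all $7$-dimensional ones outside a list of $9$ algebras and $2$ one-parameter families (in particular for every nice one). Hence no argument of the form ``set up the Einstein equations and solve them'' can by itself establish existence; one must either display a solution and verify it, or give a structural reason why the system is solvable with $\lambda\neq 0$ for this particular $\frg$, and your proposal does neither -- it is a search plan plus the hope that the search terminates, which is precisely what is to be proved. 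The paper closes this gap constructively: it displays the frame change \eqref{eq:changeofbasis} and the diagonal metric \eqref{def:einsteinmet}, and records $\Ric(g)=\frac{48661191875666868481}{659081523200000000000}\,g$, after which the verification is the routine finite check you describe. To complete your proof you would have to carry the computation through to an explicit solution (or cite one), since its existence cannot be inferred from the method alone.
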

\begin{proof}
Let us consider the coframe $\{f^1, \ldots, f^7\}$ of $\gg^\ast$ that, for simplicity of the expressions, we define by 
\begin{equation}\label{eq:changeofbasis}
\begin{aligned}
& e^1= f^1,  \quad \,\, e^2=f^2, \quad \,\, e^3= - \frac{26}{51}\,f^2 + f^3, \\
& e^4=-\frac{2300}{2601}\,f^2 - \frac{26}{51}\,f^3 + f^4,  \quad \,\,  e^5= -\frac{2300}{2601}\,f^3 + \frac{46}{51}\,f^4 + f^5, \\
& e^6= -\frac{2300}{2601}\,f^4 + f^6, \quad \,\, e^7= -\frac{50}{51}\,f^6 + f^7.
\end{aligned}
\end{equation}

In this new coframe, the structure equations of $\frg$ are
\[
\left(0,0,f^{12}, \, f^{13},\, f^{14}-\frac{24}{17}f^{13},\,\, \frac{46}{51} f^{14}+f^{15}+f^{23},\,\, \frac{50}{51} f^{15}+f^{16}+\frac{25}{17}f^{23}+f^{24}\right).
\]
We define the pseudo-metric $g$ on $\frg$, that with respect to the dual frame $\{f_1, \ldots, f_7\}$ of $\frg$ is given by the 
diagonal matrix
\begin{multline} \label{def:einsteinmet}
g = \diag\Bigl(
 \frac{71639296000000000}{168377826559400929}, \, -\frac{1946720000000}{2015993900449},\\ -\frac{2116000000}{6975757441},\, \frac{21160000}{24137569},\, -\frac{115000}{250563},\,-\frac{600 }{289}, \,1 \Bigr).
\end{multline}
One can check that $g$ is Einstein but not Ricci-flat. Indeed, with respect to the orthogonal frame $\{f_1, \ldots, f_7\}$, we have 
\[
\mathrm{Ric}(g)=\frac{48661191875666868481}{659081523200000000000}\, g.
\]
Moreover, for the scalar curvature $\mathrm{scal}(g)$ of $g$ we obtain $\mathrm{scal}(g)= -\frac{48661191875666868481}{659081523200000000000}$ since $\mathrm{trace}(g) = -1$.
\end{proof}

\begin{remark}\label{rem:nilmfd}
 Let $G$ be the connected, simply connected and nilpotent Lie group of dimension 7 with Lie algebra $\frg$. By \eqref{def:Lie},
the structure constants of $\frg$ are rational numbers. Thus, Malcev's theorem \cite{Mal} implies that there exists a uniform discrete subgroup
$\Gamma$ of $G$ such the quotient space $\Gamma{\backslash} G$,
called a nilmanifold, is compact. The pseudo-metric $g$ on $\frg$ defines a left-invariant 
pseudo-Riemannian metric, that we denote also by $g$, on the Lie group $G$ such that $g$ is Einstein and non Ricci-flat.
Hence, $g$ descends to a non Ricci-flat Einstein pseudo-Riemannian metric on $\Gamma{\backslash} G$, because 
the projection $\pi \colon G \longrightarrow \Gamma{\backslash} G$ is a local isometry, and so $\pi$ preserves the curvature.
\end{remark}

\begin{remark}
We would like to notice that to find the pseudo-metric $g$ defined in \eqref{def:einsteinmet}, we use  {\em Mathematica}$^{\text{©}}$  to 
determine a frame $\{f_1, \ldots, f_7\}$ of the Lie algebra $\frg$ so that 
$\{f_1, \ldots, f_7\}$ is orthogonal with respect to $g$ and that $g$ meets the required conditions. 
To this end, if $\{e_1, \ldots, e_7\}$ is the frame of $\frg$ dual to the coframe $\{e^1, \ldots, e^7\}$,
 we assume that each vector $e_i$ $(1 \leq i \leq 7)$ is non-isotropic under the 
desired pseudo-metric $g$. Thus, a Gram-Schmidt process can be performed. After this process, we obtain a $g$-orthogonal
frame $\{f_1,\ldots,f_7\}$ of $\frg$ such that
\[
f_k=\sum_{i=1}^k p_k^i e_i\,, 
\]
with  $p_k^k=1$, for all  $k=1,\ldots,7$. 
Therefore, we have reduced from  77 to 28 the number of unknowns to be determined. 
Then, we determine the Ricci curvature $\mathrm{Ric}(g)$ of $g$ with respect to the frame $\{f_1,\ldots,f_7\}$. 
By imposing that $\mathrm{Ric}(g)$ must be diagonal, i.e. $\mathrm{Ric}(g)(f_i,f_j)=0$ whenever $i\neq j$, we obtain a system of~$21$ equations. 
Our strategy to solve that system is as follows.  We take a small subset of  equations, which involve a reduced number of 
 unknowns, and we determine the value of some of these unknowns (usually this value will be  given in terms of the other unknowns). 
We repeat this process until the system is solved.  Then, we discard those solutions that  imply
 that  $g(f_i,f_i)=0$, for some $i\in \{1, \ldots, 7\}$, as well as those solutions in which some element of the diagonal of the Ricci curvature vanishes.  
Finally, for each valid solution, we must check if it determines a non Ricci-flat Einstein pseudo-metric of signature $(3, 4)$.
\end{remark}

\section{Closed $\G_2^*$-structures on $\frg$}\label{sec:closedG2}

In this section, we show that the Lie algebra $\frg$ defined in \eqref{def:Lie} has a closed 
$\G_2^*$-structure, but $\frg$ cannot support any closed $\G_2^*$-structure  whose underlying pseudo-metric
is Einstein and non Ricci-flat. Therefore, the non Ricci-flat Einstein pseudo-metric given in \eqref{def:einsteinmet} is not determined by any 
closed $\G_2^*$-structure on $\frg$.
 First, we need some definitions and results about $\G_2^*$-structures. 
(For more details on the group $\G_2^*$ and $\G_2^*$-structures see \cite{K1, FK, FLu}.)

Let $V$ be a real vector space of dimension 7. Consider the representation of the general linear group ${\GL}(V)$ on 
the space $\Lambda^k(V^*)$ of $k$-forms on $V$. An element $\rho\in\Lambda^k(V^*)$ is said to be \emph{stable} if its orbit 
under ${\GL}(V)$ is open in $\Lambda^k(V^*)$. In order to describe the open orbits in $\Lambda^3(V^*)$ we proceed as follows. 
For each 3-form $\varphi\in\Lambda^3(V^*)$ on $V$, we can define the symmetric quadratic form
 $b_\varphi: V\times V \to \Lambda^7(V^*)$ by
\begin{equation} \label{def:b}
6\,b_\varphi(v,w)=\iota_v\varphi\wedge\iota_w\varphi\wedge\varphi,
\end{equation}
where $v, w\in V$, and $\iota_v$ denotes the contraction by the vector $v \in V$. By \cite{Hit}, 
$\varphi$ is \emph{stable}, i.e. the ${\GL}(V)$-orbit of $\varphi$ is open in $\Lambda^3 (V^*)$, if and only if $b_{\varphi}$ is non-degenerate. In this case, we consider 
the symmetric map $g_{\varphi}: V\times V \to {\mathbb{R}}$ given by
\begin{equation}\label{eq:metric7-dim}
g_{\varphi}(v,w)\, \mathrm{vol}_{\varphi}=b_{\varphi}(v,w),
\end{equation}
where $v, w\in V$, and $\mathrm{vol}_{\varphi}$ is the volume form given by 
$$
\mathrm{vol}_{\varphi}=\sqrt[9]{\mathrm{det}(b_\varphi)}.
$$
Hitchin shows in \cite{Hit} that there are exactly two ${\GL}(V)$ open orbits in $\Lambda^3(V^*)$ which are defined by
$$
\Pi_+(V^*)=\left\{\varphi\in\Lambda^3(V^*) \;|\;g_\varphi\; \text{is positive definite}\right\},
$$
and
$$
\Pi_-(V^*)=\left\{\varphi\in\Lambda^3(V^*) \;|\;g_\varphi  \;\text{is indefinite}\right\}.
$$
Moreover, if $\varphi\in\Pi_{+}(V^*)$, then $\varphi$ defines a $\G_2$-{\em structure} on $V$. But if
$\varphi\in\Pi_-(V^*)$, then $\varphi$ defines a $\G_{2}^*$-{\em structure}  on $V$. In this case,
the metric $g_\varphi$ has signature $(3,4)$ and the stabiliser of $\varphi$ is the non-compact group $\G_2^* \subset \SO(3,4)$, i.e.
$$
\G_2^* = \{A \in {\GL}(V) \mid A^* \varphi  = \varphi\}.
$$ 
If $\varphi\in\Pi_-(V^*)$, there exists a $g_\varphi$-orthonormal
coframe $\{v^1, \dots, v^7\}$ of $V^*$ with $g_{\varphi}(v^i,v^i)=-1$ for $i=1,2,3,4$, and $g_{\varphi}(v^j,v^j)=1$ for $j=5,6,7$, such that the form $\varphi$ is given by 
\begin{equation}\label{def G2*}
\begin{array}{l}
\varphi= - v^{127}-v^{347}+v^{567}+v^{135}-v^{146}-v^{236}-v^{245},\\
\end{array}
\end{equation}
where $v^{127}$ stands for $v^1 \wedge v^2 \wedge v^3$, and so on. The action of $\G_2^*$ on $V$ does not only preserve the form $\varphi$ but also the metric
$$
g_{\varphi} = - (v^1)^2 - (v^2)^2 - (v^3)^2 - (v^4)^2 + (v^5)^2 + (v^6)^2 + (v^7)^2,
$$
and the volume form
$$
\mathrm{vol}_{\varphi} = v^1 \wedge v^2 \wedge v^3  \wedge v^4  \wedge v^5  \wedge v^6  \wedge v^7.
$$
Denote by $\star_{\varphi}$ the Hodge star operator determined by $g_{\varphi}$ and the volume form $\mathrm{vol}_{\varphi}$. Then the Hodge dual $\star_{\varphi}\varphi$
is the 4-form given by
\begin{equation}\label{eqn:star-varphi}
\star_{\varphi}\varphi= v^{1234}-v^{1256} - v^{3456} - v^{2467}+v^{2357}+v^{1457}+v^{1367}.
\end{equation}
Note that $\G_2^*$ also preserves the form $\star_{\varphi}\varphi$. 

Conversely, if $\varphi$ is a 3-form on $V$ such that there exists a coframe 
$\left\{v^1, \dots, v^7\right\}$ of $V^*$ for which $\varphi$ is expressed as in \eqref{def G2*},
then $\varphi$ is a $\G_2^*$-structure on $V$.
Such a coframe $\left\{v^1, \dots, v^7\right\}$ is then called \emph{adapted to the $\G_2^*$-structure} $\varphi$.

A  $\G_2^*$-{\em structure} on a 7-dimensional smooth manifold $M$ 
is a reduction of the structure group of 
its frame bundle from ${\GL}(7,\mathbb{R})$ to the  group $\G_2^*$.  

The presence of a $\G_2^*$-structure is equivalent to the existence of a special kind of differential 3-form
$\varphi$ on $M$, which can be defined as follows. Denote by $T_{p}(M)$ the tangent space to $M$ at $p\in M$, and by $\Omega^*(M)$ the
algebra of the differential forms on $M$.

\begin{definition} 
Let $M$ be a smooth manifold of dimension 7. A $\G_2^*$-structure on $M$ consists 
of a differential 3-form $\varphi\in \Omega^3(M)$ such that,  for each point $p\in M$, 
$\varphi_{p}$ is a $\G_2^*$-structure on the vector space $T_{p}(M)$. 
\end{definition}

Therefore, if $\varphi$ defines a $\G_2^*$-structure on $M$, then $\varphi$
can locally be written as \eqref{def G2*} with respect to some local basis $\{v^1, \ldots , v^7\}$  of local 1-forms on $M$. 

Since $\G_2^* \subset \SO(3,4)$, a $\G_2^*$-structure $\varphi$ determines a pseudo-Riemannian metric $g_{\varphi}$ of signature $(3, 4)$
and an orientation on $M$, pointwise defined as explained above. 
Denote by $\star_{ \varphi}$  the Hodge star operator determined by $g_{\varphi}$ and the orientation induced by $g_{\varphi}$.

\begin{definition}
We say that a manifold $M$ has a {\em closed} $\G_2^*$-{\em structure}
if there is a $\G_2^*$-structure $\varphi$ on $M$ such that $\varphi$ is closed, that is 
$d\varphi=0$. A manifold $M$ has a {\em coclosed} $\G_2^*$-{structure}
if there is a $\G_2^*$-{structure} $\varphi$ on $M$ such that $\varphi$ is coclosed, i.e. $d(\star_{\varphi}\varphi)=0$.
\end{definition}

\begin{proposition}\rm(\cite{Br2, FeG}\rm)
Let $M$ be a 7-manifold with a $\G_2^*$-structure $\varphi$, and let $\nabla$ be the
Levi-Civita connection of the associated  pseudo-Riemannian metric $g_{\varphi}$.  The
following conditions are equivalent:
\begin{enumerate}
\item[(i)]  $\nabla \varphi=0$;

\item[(ii)] $d\varphi=0$ and $d(\star_{\varphi}\varphi)=0$;
\end{enumerate}
Both conditions imply that the holonomy group of $g_{\varphi}$ is isomorphic to a subgroup
of $\G_2^*$.
\end{proposition}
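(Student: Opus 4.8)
The plan is to prove this equivalence through the intrinsic torsion of the $\G_2^*$-structure, adapting the Fern\'andez--Gray classification of the Riemannian $\mathrm G_2$-case to the split real form $\G_2^*$. The implication (i) $\Rightarrow$ (ii) is immediate: the exterior derivative is the alternation of the Levi-Civita covariant derivative, so $d\varphi$ is a contraction of $\nabla\varphi$ and $\nabla\varphi=0$ forces $d\varphi=0$; moreover $\nabla$ is metric and the volume form $\mathrm{vol}_\varphi$ is parallel, so $\star_\varphi$ commutes with $\nabla$ and $\nabla(\star_\varphi\varphi)=0$, whence $d(\star_\varphi\varphi)=0$.

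For the converse (ii) $\Rightarrow$ (i) I would first set up the intrinsic torsion. Since $\nabla g_\varphi=0$ always holds and the $\G_2^*$-stabiliser of $\varphi$ acts trivially on $\varphi$, the tensor $\nabla\varphi$ is a section of $T^*M\otimes(\mathfrak{so}(3,4)/\mathfrak{g}_2^*)$. As $\dim\mathfrak{so}(3,4)=21$ and $\dim\mathfrak{g}_2^*=14$, the complement $\mathfrak{g}_2^{*\perp}$ is $7$-dimensional and isomorphic to the standard representation, so $\nabla\varphi$ takes values in a $49$-dimensional space $T^*M\otimes\mathbf 7$. Decomposing this into irreducible $\G_2^*$-modules gives $\mathbf 1\oplus\mathbf 7\oplus\mathbf{14}\oplus\mathbf{27}$, and the four summands are exactly the torsion forms $\tau_0$ (a function), $\tau_1$ (a $1$-form), $\tau_2\in\Omega^2_{14}$ and $\tau_3\in\Omega^3_{27}$; by construction $\nabla\varphi=0$ if and only if all four vanish.

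The heart of the argument is then to show that $d\varphi$ and $d(\star_\varphi\varphi)$ together detect every torsion component. Because the complexification of $\mathfrak{g}_2^*$ coincides with that of the compact form, the $\G_2^*$-decompositions $\Lambda^4=\Lambda^4_1\oplus\Lambda^4_7\oplus\Lambda^4_{27}$ and $\Lambda^5=\Lambda^5_7\oplus\Lambda^5_{14}$ are formally identical to the Riemannian ones, and one derives structure equations of the shape
\[
d\varphi = \tau_0\,\star_\varphi\varphi + 3\,\tau_1\wedge\varphi + \star_\varphi\tau_3, \qquad
d(\star_\varphi\varphi) = 4\,\tau_1\wedge\star_\varphi\varphi + \tau_2\wedge\varphi,
\]
up to signs and normalising constants coming from the indefinite metric. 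From these one reads off that $d\varphi$ captures $\tau_0,\tau_1,\tau_3$ while $d(\star_\varphi\varphi)$ captures $\tau_1,\tau_2$; hence $d\varphi=0$ and $d(\star_\varphi\varphi)=0$ force $\tau_0=\tau_1=\tau_2=\tau_3=0$, i.e. $\nabla\varphi=0$. The final holonomy claim follows from the holonomy principle: a tensor is parallel precisely when it is fixed by the holonomy group of $\nabla$, so $\nabla\varphi=0$ implies $\mathrm{Hol}(g_\varphi)$ stabilises $\varphi$, and since the $\GL(V)$-stabiliser of a form of type \eqref{def G2*} is exactly $\G_2^*$, we conclude $\mathrm{Hol}(g_\varphi)\subseteq\G_2^*$.

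I expect the main obstacle to be the careful derivation of the structure equations in split signature, and in particular verifying that the $\G_2^*$-equivariant map $\nabla\varphi\mapsto(d\varphi,d(\star_\varphi\varphi))$ is injective on each irreducible summand. By Schur's lemma this reduces to checking that no single torsion module is annihilated by both exterior derivatives; the sign bookkeeping forced by $g_\varphi$ having signature $(3,4)$, notably the values of $\star_\varphi^2$ in each degree, is where the split case genuinely differs from the standard $\mathrm G_2$ computation and must be handled with care.
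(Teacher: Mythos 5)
The paper itself offers no proof of this proposition---it is imported from \cite{Br2, FeG}---and your intrinsic-torsion argument is precisely the approach of those references (the Fern\'andez--Gray analysis of $\nabla\varphi$ via the splitting of $T^*M\otimes\left(\mathfrak{so}(3,4)/\mathfrak{g}_2^*\right)$ into the modules $\mathbf 1\oplus\mathbf 7\oplus\mathbf{14}\oplus\mathbf{27}$, as adapted by Bryant), so your proposal follows essentially the same route as the paper's cited sources and is correct in outline. The one step you flag as delicate---that each torsion module appears with a non-zero coefficient in $d\varphi$ or $d(\star_\varphi\varphi)$ in split signature---can be settled cleanly by complexification: the equivariant constants are determined over $\bC$, where $\G_2^*$ and the compact form have the same complexification, so the non-vanishing already established in the Riemannian case transfers verbatim.
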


Now let $G$ be a 7-dimensional simply connected nilpotent
Lie group with Lie algebra  
$\mathfrak g$. Then a $\G_2^*$-structure on $G$ 
 is \emph{left-invariant} if and only if the corresponding
3-form $\varphi$ is left-invariant. Thus, a left-invariant $\G_2^*$-structure on $G$ 
 corresponds to an element $\varphi$ of $\Lambda^3({\mathfrak g}^*)$ 
that can be written as \eqref{def G2*} 
with respect to some $g_\varphi$-orthonormal coframe $\{v^1,\dotsc, v^7\}$ of the dual space 
${\mathfrak{g}}^*$ of $\mathfrak g$. Such an element 
$\varphi \in \Lambda^3({\mathfrak g}^*)$ defines a $\G_2^*$-{\em structure} on 
$\mathfrak g$. Moreover, with respect to the coframe $\{v^1,\dotsc, v^7\}$ of 
${\mathfrak{g}}^*$,  the dual form $\star_{\varphi}\varphi$ is expressed as \eqref{eqn:star-varphi}.
If $d \varphi =0$,  then the $\G_2^*$-structure on 
$\mathfrak g$ is called {\it closed}, and 
if $\varphi$ is coclosed, that is if $\star_{ \varphi} \varphi$ is closed, then the $\G_2^*$-structure is called {\it coclosed}.
 
With this background in mind, we will prove in Theorem~\ref{thm:thmEinstein2} that no closed $\G_2^\ast$-structure on 
the Lie algebra $\frg$, defined in \eqref{def:Lie}, 
can induce a non Ricci-flat Einstein pseudo-metric, and so no closed $\G_2^\ast$-structure on $\frg$
induces the pseudo-metric $g$ given in
 \eqref{def:einsteinmet}. To this end, we need to first show some  
 necessary conditions that must satisfy a pseudo-metric induced by a closed $\G_2^*$-structure on $\mathfrak g$.

From now on, we denote by $\{e_{1}, \dots, e_{7}\}$ the frame of  $\frg$ dual to the 
coframe $\{e^{1}, \dots, e^{7}\}$ of $\gg^\ast$ (see  \eqref{def:Lie}). Moreover, if
$\rho$ is a $\G_2^\ast$-structure on the Lie algebra $\frg$,
we identify $b_{\rho}\in S^2 \frg^*\otimes \Lambda^7 \gg^\ast$ (where $b_{\rho}$ is defined in  \eqref{def:b}) with an element in $S^2\gg^\ast$
 using the volume form $e^{1234567}$ of $\frg$.

\begin{lemma}\label{lem:grhozero1}
Let $\frg$ be the 7-dimensional Lie algebra defined in \eqref{def:Lie}, and let $\rho$ be a closed $\G_2^\ast$-structure on $\frg$  inducing the pseudo-metric $g_\rho$. 
Then, $g_\rho(e_7,e_7)$, $g_\rho(e_7,e_6)$, $g_\rho(e_7,e_5)$ and $g_\rho(e_6,e_6)$ are all zero.
\end{lemma}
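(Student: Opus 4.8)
The plan is to turn every statement about $g_\rho$ into a statement about the coefficient of $e^{1234567}$ in a triple wedge product, and then to exploit the contraction by the central vector $e_7$. Since $g_\rho(v,w)\,\mathrm{vol}_\rho=b_\rho(v,w)$ and $\mathrm{vol}_\rho$ is a non-zero multiple of the fixed volume form $e^{1234567}$, proving $g_\rho(e_i,e_j)=0$ is equivalent to showing that the coefficient of $e^{1234567}$ in $\iota_{e_i}\rho\wedge\iota_{e_j}\rho\wedge\rho$ vanishes. I therefore work throughout with $b_\rho$ and never touch the ninth root defining $\mathrm{vol}_\rho$.

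First I would parametrize the closed $\G_2^*$-structures. Writing $\sigma:=\iota_{e_7}\rho$, one has $\rho=\sigma\wedge e^7+\bar\rho$ with $\sigma\in\Lambda^2\langle e^1,\dots,e^6\rangle$ and $\bar\rho\in\Lambda^3\langle e^1,\dots,e^6\rangle$. Since $e_7$ is central, $\mathcal L_{e_7}=0$ on $\Lambda^\ast\frg^\ast$, so $d\rho=0$ splits into an $e^7$-part, giving $d\sigma=0$, and an $e^7$-free part $\sigma\wedge de^7+d\bar\rho=0$. A direct computation of the closed $2$-forms shows that $d\sigma=0$ forces $\sigma$ to be a linear combination of $e^{12},e^{13},e^{14},e^{15},e^{23},\ e^{16}+e^{24}$ and $e^{25}-e^{34}$. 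The $e^7$-free equation then requires $\sigma\wedge de^7$ to be an exact $4$-form in $\langle e^1,\dots,e^6\rangle$, and here lies the main obstacle: one must compute the image of $d\colon\Lambda^3\langle e^1,\dots,e^6\rangle\to\Lambda^4\langle e^1,\dots,e^6\rangle$ and observe that its intersection with $\langle e^{1256},e^{1346}\rangle$ is only the line spanned by $e^{1256}+e^{1346}$, whereas the $(e^{1256},e^{1346})$-part of $\sigma\wedge de^7$ is proportional to $e^{1256}-e^{1346}$. Exactness therefore kills the $e^{25}-e^{34}$ component of $\sigma$ entirely. This single, non-obvious linear constraint is what makes the whole argument run.

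Once the $e^{25}-e^{34}$ term is gone, three structural facts emerge, each a consequence of the constraints above. First, every monomial of $\sigma$ contains $e^1$ or $e^2$, so $\sigma=e^1\wedge\alpha+e^2\wedge\beta$. Second, the same holds for $\iota_{e_6}\rho$, which moreover contains no $e^6$, since contraction deletes the index $6$ and cannot restore it. Third, solving $\sigma\wedge de^7+d\bar\rho=0$ shows that $\bar\rho$ has exactly one monomial lying in $\Lambda^3\langle e^3,e^4,e^5,e^6\rangle$, namely a determined multiple of $e^{345}$ (no closed $3$-form of $\langle e^1,\dots,e^6\rangle$ involves $e^{345}$, so its coefficient is forced). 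Hence $e^{345}$ is the only monomial of $\rho$ lying in $\Lambda^3\langle e^3,\dots,e^7\rangle$.

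With these facts I would extract the coefficient of $e^{1234567}$ in each of the four products. For $g_\rho(e_7,e_7)$ this is immediate: $\iota_{e_7}\rho\wedge\iota_{e_7}\rho\wedge\rho=\sigma\wedge\sigma\wedge\rho=\sigma^3\wedge e^7$, and $\sigma^3=0$ because three factors each divisible by $e^1$ or $e^2$ cannot be non-zero. For $g_\rho(e_6,e_6)$ and $g_\rho(e_7,e_6)$ the products $\iota_{e_6}\rho\wedge\iota_{e_6}\rho$ and $\iota_{e_7}\rho\wedge\iota_{e_6}\rho$ are of the form $e^{12}\wedge\theta$ for a $2$-form $\theta$ (both contraction forms meet $\{1,2\}$); pairing with $\rho$, the only way to reach $e^{1234567}$ is to take the distinguished monomial $e^{345}$ from $\rho$ and an $e^{67}$-term from $\theta$, and $\theta$ has no $e^{67}$-term precisely because $\sigma$ has no $e^{26}$. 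Finally, $\iota_{e_5}\rho$ splits as a part whose monomials all meet $\{1,2\}$ plus a single $e^{34}$-term; the first part is handled exactly as before, while $\sigma\wedge e^{34}$ is a combination of $e^{1234},e^{1345},e^{1346}$ and so contributes to $g_\rho(e_7,e_5)$ only through the coefficients of $e^{567},e^{267},e^{257}$ in $\rho$, each of which is zero by the closedness constraints of the second paragraph. Hence $g_\rho(e_7,e_7)$, $g_\rho(e_7,e_6)$, $g_\rho(e_7,e_5)$ and $g_\rho(e_6,e_6)$ all vanish. The only genuinely delicate point is pinning down the exactness constraint that removes the $e^{25}-e^{34}$ term; the remaining coefficient extractions are routine sign bookkeeping, best confirmed symbolically.
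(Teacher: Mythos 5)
Your proposal is correct, and it survives detailed checking: the closed $2$-forms in $\Lambda^2\langle e^1,\dots,e^6\rangle$ are exactly the span of $e^{12},e^{13},e^{14},e^{15},e^{23},e^{16}+e^{24},e^{25}-e^{34}$; the only monomial of $\Lambda^3\langle e^1,\dots,e^6\rangle$ whose differential involves $e^{1256}$ or $e^{1346}$ is $e^{356}$, with $d(e^{356})=e^{1256}+e^{1346}$, while $(e^{25}-e^{34})\wedge de^7=e^{1256}-e^{1346}$, so the equation $d\bar\rho=-\sigma\wedge de^7$ really does kill the $e^{25}-e^{34}$ component of $\sigma$; and your three structural facts then hold (the coefficients of $e^{346},e^{356},e^{456}$ in $\bar\rho$ are forced to vanish, and that of $e^{345}$ equals $-s_{15}+s_{16}+s_{23}$ when $\sigma=s_{12}e^{12}+s_{13}e^{13}+s_{14}e^{14}+s_{15}e^{15}+s_{23}e^{23}+s_{16}(e^{16}+e^{24})$), after which your support and pigeonhole arguments give all four vanishings.

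The paper follows the same basic strategy --- translate $g_\rho$ into $b_\rho$ and exploit closedness --- but organizes the computation globally rather than along the center. It solves $d\gamma=0$ once and for all, recording the generic closed $3$-form in \eqref{eq:closedform} (an $18$-parameter family), reads off $\iota_{e_7}\gamma$, $\iota_{e_6}\gamma$, $\iota_{e_5}\gamma$, and verifies by direct expansion that the four triple products vanish identically in the parameters. Your cohomological constraint is present there but invisible: it is precisely the reason \eqref{eq:closedform} contains no $e^{257}$ or $e^{347}$ terms, i.e.\ why $\iota_{e_7}\gamma$ has no $e^{25}$ or $e^{34}$ component. What your decomposition $\rho=\sigma\wedge e^7+\bar\rho$ buys is a hand-checkable proof: the single nontrivial input is isolated, and the final vanishings become structural ($\sigma^3=0$ because each factor is divisible by $e^1$ or $e^2$; only $e^{345}$ survives in $\Lambda^3\langle e^3,\dots,e^7\rangle$, so only the $e^{67}$-component of the complementary $2$-form can matter). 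What the paper's single parametrization buys is reuse: \eqref{eq:closedform} is also the engine of Lemma \ref{lem:grhozero2} and Lemma \ref{lem:grhononzero}, where the explicit values $b_\rho(e_4,e_7)=\tfrac12 c_{167}^2(-c_{157}+c_{167}+c_{237})$, $b_\rho(e_3,e_7)$ and $b_\rho(e_5,e_6)$ are needed, not merely their vanishing pattern.

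One phrasing caveat. The fact your argument needs is not that the image of $d\colon\Lambda^3\langle e^1,\dots,e^6\rangle\to\Lambda^4\langle e^1,\dots,e^6\rangle$ meets $\langle e^{1256},e^{1346}\rangle$ only in the line $\langle e^{1256}+e^{1346}\rangle$ (an intersection statement), but that every exact $4$-form has equal $e^{1256}$- and $e^{1346}$-coefficients (a projection statement): $-\sigma\wedge de^7$ does not lie in $\langle e^{1256},e^{1346}\rangle$, so the intersection statement alone cannot be applied to it. Since $e^{356}$ is the only monomial whose differential touches those two coordinates, both statements are true here, and the projection version is what makes ``exactness kills the $e^{25}-e^{34}$ component'' a one-line deduction; state it that way.
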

\begin{proof}
From~\eqref{eq:metric7-dim}, we know that if $u, v\in \frg$ then, $g_{\rho}(u,v)=0$ if and only if $b_\rho(u,v)=0$, 
where $b_\rho(u,v)= \frac{1}{6} \iota_u\rho\wedge\iota_v\rho\wedge\rho$ by \eqref{def:b}. 
To prove that $b_{\rho}(e_7,e_7)=0$, $b_{\rho}(e_7,e_6)=0$, $b_{\rho}(e_7,e_5)=0$ and  $b_{\rho}(e_6,e_6)=0$ we proceed as follows.
A generic closed 3-form $\gamma$ on $\gg$ has the following expression
\begin{equation}
\begin{aligned}\label{eq:closedform}
& \gamma=\, c_{123}\, e^{123}+c_{124} \,e^{124}+c_{125} \,e^{125}+c_{126} \,e^{126}+c_{127}\, e^{127}+c_{134}\, e^{134}+c_{135} \,e^{135}\\
& \,\,\,\,\, + c_{136} \,e^{136}+c_{137} \,e^{137}+c_{145} \,e^{145}+c_{146\,}e^{146}+c_{147} \,e^{147}+c_{156} \,e^{156}+c_{157}\, e^{157}\\
&\,\,\,\,\,+ c_{167}\, e^{167}+c_{234} \,e^{234}+(c_{137} - c_{146} - c_{147}) \,e^{235}+c_{236} \,e^{236} +c_{237} \,e^{237} \\
& \,\,\,\,\,+ (-c_{156} - c_{157}-c_{236}) \,e^{245}+(- c_{167}-c_{237})\,e^{246} + c_{167} \,e^{247} - 2\, c_{167} \,e^{256} \\
& \,\,\,\,\, + (-c_{157} + c_{167}+c_{237}) \,e^{345},
\end{aligned}
\end{equation}
where $c_{ijk}$ are arbitrary real numbers. Thus, 
\begin{equation*}
\iota_{e_7}\gamma=c_{127} \,e^{12}+c_{137} \,e^{13}+c_{147}\,e^{14}+c_{157} \,e^{15}+c_{167} \,e^{16}+c_{237} \,e^{23}+ c_{167} \,e^{24}.
\end{equation*}
Consequently,
\begin{equation*}
\iota_{e_7}\gamma \wedge \iota_{e_7}\gamma\wedge \gamma=0,
\end{equation*}
for any closed 3-form $\gamma$ on $\frg$. 
This equality and  \eqref{def:b} imply that if $\rho =  \gamma$ is a closed $\G_2^*$-structure on $\frg$, then $b_{\rho}(e_7,e_7)=0$.

Using again \eqref{eq:closedform} we have 
\begin{equation}\label{eq:e6-0}
\begin{aligned}
& \iota_{e_6}\gamma=c_{126} \,e^{12}+c_{136} \,e^{13}+c_{146}\,e^{14}+c_{156} \,e^{15}-c_{167} \,e^{17}+c_{236} \,e^{23} \\
& \,\,\,\,\, \,\,\,\,\, \,\,\,\, - (c_{167} +c_{237}) e^{24} - 2 c_{167} \,e^{25}.
\end{aligned}
\end{equation}
So,
\begin{equation}\label{eq:e6-1}
\iota_{e_7}\gamma \wedge \iota_{e_6}\gamma\wedge \gamma=0, 
\end{equation}
and 
\begin{equation}\label{eq:e6-2}
\iota_{e_6}\gamma \wedge \iota_{e_6}\gamma\wedge \gamma=0,
\end{equation}
for any closed 3-form $\gamma$ on $\frg$. 

Now, let $\rho =  \gamma$ be a closed $\G_2^*$-structure on $\frg$. Then, from  \eqref{def:b} and  \eqref{eq:e6-1} we obtain $b_{\rho}(e_7,e_6)=0$.
Moreover, \eqref{def:b} and  \eqref{eq:e6-2} imply that $b_{\rho}(e_6,e_6)=0$.

Using  \eqref{eq:closedform} we have that the 2-form $\iota_{e_5}\gamma$ on $\frg$ is given by
\begin{equation}\label{eq:e5}
\begin{aligned}
& \iota_{e_5}\gamma=c_{125} \,e^{12}+c_{135} \,e^{13}+c_{145}\,e^{14} - c_{156} \,e^{16}-c_{157} \,e^{17} - (c_{156} + c_{157} + c_{236}) \,e^{24} \\
& \,\,\,\,\,\,\,\,\, \,\,\, + 2\, c_{167} \,e^{26}  +   (- c_{157} + c_{167} + c_{237}) \,e^{34}.
\end{aligned}
\end{equation}
Then,
\begin{equation*}\label{eq:e6-3}
\iota_{e_7}\gamma \wedge \iota_{e_5}\gamma\wedge \gamma=0,
\end{equation*}
for any closed 3-form $\gamma$ on $\frg$. Therefore, if $\rho =  \gamma$ is a closed $\G_2^*$-structure on $\frg$, we have 
$b_{\rho}(e_7,e_5)=0$.
\end{proof}

\begin{lemma}\label{lem:grhozero2}
Let $\frg$ be the 7-dimensional Lie algebra defined in \eqref{def:Lie}, and let $\rho$ be a closed $\G_2^\ast$-structure on $\frg$  inducing the pseudo-metric $g_\rho$. 
Then, $g_\rho(e^1,e^1)$ and $g_\rho(e^1,e^2)$ are both zero.
\end{lemma}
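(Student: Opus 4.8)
My plan is to read $g_\rho(e^1,e^1)$ and $g_\rho(e^1,e^2)$ as components of the metric that $g_\rho$ induces on the dual space $\frg^*$, i.e. as entries of the inverse of the Gram matrix $G=\bigl(g_\rho(e_i,e_j)\bigr)_{i,j=1}^7$. The first step is to strip off the conformal factor relating $g_\rho$ and $b_\rho$. Writing $b_\rho(e_i,e_j)$ for the scalars obtained from \eqref{def:b} under the identification of $b_\rho$ with an element of $S^2\frg^*$ via the volume form $e^{1234567}$, relation \eqref{eq:metric7-dim} gives $G=\lambda^{-1}B$, where $B=\bigl(b_\rho(e_i,e_j)\bigr)$ and $\lambda\neq 0$ is the scalar with $\mathrm{vol}_\rho=\lambda\,e^{1234567}$. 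Hence $G^{-1}=\lambda\,B^{-1}$, so a dual-metric entry vanishes precisely when the corresponding entry of $B^{-1}$ does. By the cofactor formula for the inverse this amounts to two determinant conditions: $g_\rho(e^1,e^1)=0$ exactly when the principal minor $\det\bigl(B|_{\langle e_2,\dots,e_7\rangle}\bigr)$ (delete row and column $1$) vanishes, i.e. when $b_\rho$ is degenerate on $\langle e_2,\dots,e_7\rangle$; and $g_\rho(e^1,e^2)=0$ exactly when the minor of $B$ obtained by deleting row $1$ and column $2$ vanishes.

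Next I would compute the entries of $B$ exactly as in the proof of Lemma \ref{lem:grhozero1}. Starting from the generic closed $3$-form $\gamma$ of \eqref{eq:closedform}, each contraction $\iota_{e_i}\gamma$ is an explicit $2$-form, and $b_\rho(e_i,e_j)=\tfrac16\,\iota_{e_i}\gamma\wedge\iota_{e_j}\gamma\wedge\gamma$ becomes a quadratic polynomial in the free coefficients $c_{ijk}$. Lemma \ref{lem:grhozero1} already yields $b_\rho(e_7,e_7)=b_\rho(e_7,e_6)=b_\rho(e_7,e_5)=b_\rho(e_6,e_6)=0$, which makes the rows and columns of $B$ with indices $6$ and $7$ sparse and markedly simplifies the two minors above.

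Finally I would insert these polynomial entries into the two minors and check that each vanishes identically in the $c_{ijk}$. The result is the exact dual of Lemma \ref{lem:grhozero1}: there the top $\langle e_6,e_7\rangle$ of the lower central series is $g_\rho$-isotropic, whereas here the plane $\langle e^1,e^2\rangle$ of closed $1$-forms is isotropic for the induced dual metric. I expect the main difficulty to be computational rather than conceptual, namely verifying that a $6\times6$ determinant and an off-diagonal $6\times6$ minor, each of high degree in the coefficients of $\gamma$, vanish for every closed $\gamma$. What forces the vanishing is the closedness of $\rho$: the coefficients of $\gamma$ are not free but satisfy the linear relations displayed in \eqref{eq:closedform} (the dependent coefficients of $e^{235}$, $e^{245}$, $e^{246}$, $e^{247}$, $e^{256}$ and $e^{345}$), and it is this enforced dependence that turns the two minors into identities. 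As in Section \ref{sec:example}, the determinantal verification is most safely carried out with computer algebra.
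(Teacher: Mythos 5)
Your proposal follows essentially the same route as the paper's proof: the paper likewise identifies $g_\rho(e^1,e^1)$ and $g_\rho(e^1,e^2)$ with entries of the inverse Gram matrix $A^{-1}$, reduces them (up to nonzero factors) to the minors $\det\left(b_\rho(e_i,e_j)\right)_{i,j=2}^7$ and $\det\left(b_\rho(e_i,e_j)\right)_{i\neq 1,\, j\neq 2}$ of the matrix of $b_\rho$-values computed from the generic closed $3$-form \eqref{eq:closedform}, and verifies by symbolic computation that both minors vanish identically. The only quibble is that the entries $b_\rho(e_i,e_j)=\tfrac16\,\iota_{e_i}\gamma\wedge\iota_{e_j}\gamma\wedge\gamma$ are cubic, not quadratic, in the coefficients $c_{ijk}$, which is immaterial to the argument.
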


\begin{proof}
Clearly $\rho$ can be expressed as in~\eqref{eq:closedform} since $\rho$ is a closed 3-form on $\frg$.
Then, using ~\eqref{def:b} and~\eqref{eq:metric7-dim} we can determine $g_\rho(e_i,e_j)$, for $1 \leq i \leq j \leq 7$.
Let $A=(g_\rho(e_i,e_j))_{i,j=1}^7$ be the Gram matrix of the pseudo-metric $g_\rho$.
Then, $A^{-1}=(g_\rho(e^i,e^j))_{i,j=1}^7$ is the Gram matrix of $g_\rho$ on $\gg^\ast$ with respect to the basis $\{e^1,\ldots,e^7\}$ of  $\gg^\ast$.
 Therefore, $g_\rho(e^1,e^1)$ is a multiple of 
\[
\frac{\det\left(b_\rho(e_i,e_j)\right)_{i,j=2}^7}{\det \left(b_\rho(e_i,e_j)\right)_{i,j=1}^7}.
\]
But a calculation (by using a symbolic software) shows that $\det\left(b_\rho(e_i,e_j)\right)_{i,j=2}^7=0$.
Hence, $g_\rho(e^1,e^1)=0$.
Similarly, one can check that $\det\left(b_\rho(e_i,e_j)\right)_{i \not=1, j\not=2}=0$, and so $g_\rho(e^1,e^2)=0$.
\end{proof}

In order to show other conditions that must satisfy a non Ricci-flat Einstein pseudo-metric $g_\rho$, determined 
by a closed $\G_2^\ast$-structure on $\frg$, we need the following result, which shows an explicit expression 
of the Ricci curvature of any pseudo-metric on a unimodular Lie algebra with Killing form zero. 

\begin{proposition} {\rm{(}}\cite[Proposition 2.1]{CR1}{\rm{)}} \label{pro:obstruction1}
Let $\frh$ a unimodular Lie algebra with Killing form zero and a pseudo-metric $h$.
Let $\ad(\frh)$ be the image of $\frh$ in $\gz(\frh)^\circ\otimes \frh^1,$
where $\frh^1 = [\frh, \frh]$, and $\gz(\frh)^\circ$ is the annihilator of the center $\gz(\frh)$ of $\frh$ in $\frh^*$. Then,
\begin{equation}\label{eq:Riccicurvature}
\Ric(h)(u,v)=\frac12\Big(h(du^\flat,dv^\flat)-h(\ad(u),\ad(v))\Big),
\end{equation}
for $u, v \in \frh$, and  where $\flat: \frh\to {\frh}^\ast$ denotes the musical isomorphism induced by the pseudo-metric $h$.
\end{proposition}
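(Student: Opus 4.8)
The plan is to compute the Ricci curvature of the left-invariant pseudo-Riemannian metric determined by $h$ directly from the Koszul formula, and then to recognise the surviving terms as the two invariant expressions in the statement. First I would fix a pseudo-orthonormal basis $\{e_1,\dots,e_n\}$ of $\frh$, so that $h(e_i,e_j)=\epsilon_i\delta_{ij}$ with $\epsilon_i\in\{\pm 1\}$, and write $[e_i,e_j]=\sum_k c_{ij}^k e_k$. Since the inner products of left-invariant vector fields are constant, the Koszul formula reduces to the purely algebraic expression
\[
2\,h(\nabla_x y,z)=h([x,y],z)-h([y,z],x)+h([z,x],y),
\]
which determines the Levi-Civita connection $\nabla$. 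From here the curvature tensor $R(x,y)=\nabla_x\nabla_y-\nabla_y\nabla_x-\nabla_{[x,y]}$ and the Ricci tensor $\Ric(h)(u,v)=\sum_k \epsilon_k\, h(R(e_k,u)v,e_k)$ become quadratic polynomials in the $c_{ij}^k$.

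Carrying out this expansion (the standard computation for the curvature of a left-invariant metric, with the signs $\epsilon_i$ carried through the pseudo-Riemannian frame) presents the Ricci curvature as a sum of four kinds of terms: one quadratic in $\ad$, one built from $du^\flat$, one involving the Killing form $B(u,v)=\Tr(\ad u\circ\ad v)$, and terms involving the trace functional $x\mapsto\Tr(\ad x)$, i.e. the mean-curvature vector. At this stage I would invoke the two hypotheses: unimodularity forces $\Tr(\ad x)=0$ for every $x$, so all mean-curvature terms drop out, while the assumption that the Killing form vanishes kills the $B$-term. Exactly two terms survive, one with coefficient $+\tfrac12$ and one with $-\tfrac12$.

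It then remains to match these two terms with $h(du^\flat,dv^\flat)$ and $h(\ad u,\ad v)$. For the second, under the identification of $\ad u\in\frh^*\otimes\frh$ with an endomorphism the induced inner product is $h(\ad u,\ad v)=\sum_i\epsilon_i\,h([u,e_i],[v,e_i])$, which is precisely the term quadratic in the brackets; that $\ad(\frh)$ lands in $\gz(\frh)^\circ\otimes\frh^1$ is immediate, since $\ad u$ annihilates the center and takes values in $\frh^1=[\frh,\frh]$. For the first, using $du^\flat(x,y)=-h(u,[x,y])$ together with the induced metric on $\Lambda^2\frh^*$ gives $h(du^\flat,dv^\flat)=\sum_{i<j}\epsilon_i\epsilon_j\,h(u,[e_i,e_j])\,h(v,[e_i,e_j])$, matching the other surviving term. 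The main obstacle I anticipate is organisational rather than conceptual: correctly grouping the many quadratic-in-$c_{ij}^k$ terms produced by the curvature expansion, and pinning down the normalisation conventions for the induced inner products on $\Lambda^2\frh^*$ and on $\End(\frh)$ so that the constants $\pm\tfrac12$ emerge exactly, together with the careful bookkeeping of the signs $\epsilon_i$ that separates the pseudo-Riemannian case from Milnor's Riemannian computation. Polarising the resulting identity for $\Ric(h)(u,u)$ finally yields the stated formula for $\Ric(h)(u,v)$.
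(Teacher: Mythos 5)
Your proposal is correct. Note, though, that the paper itself contains no proof of this proposition: it is quoted verbatim from Conti--Rossi \cite[Proposition 2.1]{CR1}, so there is no internal argument to compare yours against; what you have written is the standard derivation (Milnor's left-invariant curvature computation \cite{M}, carried through with the signs $\epsilon_i$ of a pseudo-orthonormal frame), which is essentially how the formula is obtained in the cited source. The only delicate point is the one you flag yourself, and it does work out: with the paper's conventions $h(\alpha\otimes u,\beta\otimes v)=h(\alpha,\beta)\,h(u,v)$ and $h(\alpha^1\wedge\alpha^2,\beta^1\wedge\beta^2)=\det\bigl(h(\alpha^i,\beta^j)\bigr)$, one gets $h(\ad u,\ad v)=\sum_i\epsilon_i\, h([u,e_i],[v,e_i])$ and $h(du^\flat,dv^\flat)=\sum_{i<j}\epsilon_i\epsilon_j\, h(u,[e_i,e_j])\,h(v,[e_i,e_j])$, so after unimodularity kills the mean-curvature terms and the hypothesis $B=0$ kills the Killing-form term, the surviving expression $-\tfrac12\sum_i\epsilon_i h([u,e_i],[v,e_i])+\tfrac14\sum_{i,j}\epsilon_i\epsilon_j h(u,[e_i,e_j])\,h(v,[e_i,e_j])$ collapses exactly to $\tfrac12\bigl(h(du^\flat,dv^\flat)-h(\ad u,\ad v)\bigr)$, the coefficient $\tfrac14$ becoming $\tfrac12$ because the unordered double sum counts each pair $i<j$ twice. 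A quick sanity check on the Heisenberg algebra ($[e_1,e_2]=e_3$, Riemannian): your formula gives $\Ric(e_1,e_1)=-\tfrac12$ and $\Ric(e_3,e_3)=\tfrac12$, as it should.
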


Note that the pseudo-metric $h$ on $\frh$ induces not only a pseudo-metric $h$ on $\frh^\ast$ but also 
a pseudo-metric $h$ on $\Lambda^{k}(\frh^\ast)$ which is given by
$$
h(\alpha^1\wedge \ldots \wedge \alpha^k,\, \beta^1\wedge \ldots \wedge \beta^k) = \det\Big(h(\alpha^i,\beta^j)\Big)_{i, j = 1} ^{k},
$$
for $\alpha^1, \ldots, \alpha^k, \beta^1, \ldots,\beta^k \in \frh^\ast$. Moreover, since $\ad(\frh) \subset \gz(\frh)^\circ\otimes \frh^1 \subset \frh^\ast \otimes \frh$, 
the pseudo-metric $h$ on $\frh$ induces a pseudo-metric $h$ on $\ad(\frh)$ which is given by
$$
h(\alpha\otimes u, \beta\otimes v) = h(\alpha, \beta) \, h(u,v), 
$$
for $\alpha\otimes u, \, \beta\otimes v \in \ad(\frh)$.

Now, let $\frg$ be the 7-dimensional Lie algebra defined in \eqref{def:Lie}, and let $\rho$ be a closed $\G_2^\ast$-structure 
on $\frg$  inducing the pseudo-metric $g_\rho$. As a consequence of Proposition \ref{pro:obstruction1}, we have
\begin{equation}\label{eq:Riccicurvature-lie}
\Ric(g_\rho)(e_7, v)=\frac12 \, g_\rho \left(de_{7}^\flat, dv^\flat\right),
\end{equation}
for any $v\in \frg$. In fact,  by ~\eqref{def:Lie}, we know that $\gz(\gg) = \langle e_7 \rangle$,
where $\gz(\gg)$ denotes the center of $\gg$.
So, $\ad(e_7)=0$. Therefore, from~\eqref{eq:Riccicurvature},
$$
Ric(h)(e_7, v)=\frac12 \, h \left(de_{7}^\flat, dv^\flat\right),
$$ 
for any $v\in \frg$, and for any pseudo-metric $h$ on $\gg$. In particular, for $h=g_{\rho}$, we have~\eqref{eq:Riccicurvature-lie}.

\begin{lemma} \label{lem:grhononzero}
Let $\frg$ be the Lie algebra defined in \eqref{def:Lie}, and let $\rho$ be a closed $\G_2^\ast$-structure on $\frg$ whose underlying pseudo-metric
$g_\rho$  is Einstein and non Ricci-flat. Then, $g_\rho(e_4,e_7)$ and $g_\rho(e_5,e_6)$ are both nonzero.
\end{lemma}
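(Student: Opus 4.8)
The plan is to extract the Einstein condition along the central direction $e_7$ and along $e_6$, convert the resulting scalar equations into sign-definite (squared) identities, and then use $\lambda\neq 0$ together with the non-degeneracy of $g_\rho$ to exclude the vanishing of $g_\rho(e_4,e_7)$ and of $g_\rho(e_5,e_6)$. First I would record, via Lemma~\ref{lem:grhozero1}, that $e_7^\flat=\sum_{i=1}^4 g_\rho(e_i,e_7)\,e^i$, so that from \eqref{def:Lie} one gets $de_7^\flat=g_\rho(e_3,e_7)\,e^{12}+g_\rho(e_4,e_7)\,e^{13}$, and similarly $de_6^\flat=g_\rho(e_3,e_6)\,e^{12}+g_\rho(e_4,e_6)\,e^{13}+g_\rho(e_5,e_6)\,e^{14}$. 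The crucial structural remark is that, by Lemma~\ref{lem:grhozero2}, the induced metric on $\Lambda^2\frg^\ast$ restricted to the $2$-forms containing $e^1$ is the rank-one form $g_\rho(e^{1i},e^{1j})=-\,g_\rho(e^1,e^i)\,g_\rho(e^1,e^j)$; in particular every $e^{12}$-part drops out of the inner products below, and any combination $\sum_j c_j\,e^{1j}$ is null-or-negative, vanishing iff $\sum_j c_j\,g_\rho(e^1,e^j)=0$.

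Then I would feed these into \eqref{eq:Riccicurvature-lie} and \eqref{eq:Riccicurvature}. Along $e_7$ one computes $\Ric(g_\rho)(e_7,e_7)=\tfrac12 g_\rho(de_7^\flat,de_7^\flat)=-\tfrac12\,g_\rho(e_4,e_7)^2\,g_\rho(e^1,e^3)^2$, which must equal $\lambda\,g_\rho(e_7,e_7)=0$ by Lemma~\ref{lem:grhozero1}, giving $g_\rho(e_4,e_7)\,g_\rho(e^1,e^3)=0$. Along $e_6$ one has $\ad(e_6)=e^1\otimes e_7$, whence $g_\rho(\ad e_6,\ad e_6)=g_\rho(e^1,e^1)\,g_\rho(e_7,e_7)=0$, so that \eqref{eq:Riccicurvature} yields $\Ric(g_\rho)(e_6,e_6)=\tfrac12 g_\rho(de_6^\flat,de_6^\flat)=-\tfrac12\bigl(g_\rho(e_4,e_6)\,g_\rho(e^1,e^3)+g_\rho(e_5,e_6)\,g_\rho(e^1,e^4)\bigr)^2$, which equals $\lambda\,g_\rho(e_6,e_6)=0$ and hence $g_\rho(e_4,e_6)\,g_\rho(e^1,e^3)+g_\rho(e_5,e_6)\,g_\rho(e^1,e^4)=0$. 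Finally, writing the full $e_7$-row of \eqref{eq:Riccicurvature-lie} as $d^{\ast}(de_7^\flat)=2\lambda\,e_7^\flat$, where $d^{\ast}$ is the codifferential adjoint to $d$, I would read off the scalar system $g_\rho(de_7^\flat,de^k)=2\lambda\,\delta_{7k}$, whose cases $k=6$ and $k=7$ produce a relation forcing $g_\rho(e^1,e^3)=0$ and an expression proportional to $g_\rho(e^1,e^3)+g_\rho(e^1,e^4)$ that equals $2\lambda$.

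The proof then proceeds by contradiction. If $g_\rho(e_4,e_7)=0$, then non-degeneracy makes $g_\rho(e_3,e_7)\neq 0$ (else $de_7^\flat=0$ and $\lambda\,e_7^\flat=0$), the $k=6,7$ equations with $\lambda\neq 0$ force $g_\rho(e^1,e^3)=0$ and $g_\rho(e^1,e^4)\neq 0$, and the $e_6$-identity then yields $g_\rho(e_5,e_6)=0$; substituting this information into the analogous identities coming from $\Ric(g_\rho)(e_5,e_5)$ and $\Ric(g_\rho)(e_5,e_6)$ (whose $\ad$-terms again vanish by Lemmas~\ref{lem:grhozero1}--\ref{lem:grhozero2}) and discarding, via $g_\rho\,g_\rho^{-1}=\Id$, the branches on which an inverse-metric entry degenerates, drives $\lambda$ to $0$; the case $g_\rho(e_5,e_6)=0$ is treated symmetrically. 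I expect the main obstacle to be exactly this elimination: the identities systematically couple the metric coefficients $g_\rho(e_i,e_j)$ with the inverse-metric coefficients $g_\rho(e^i,e^j)$, so one must repeatedly pass between $g_\rho$ and $g_\rho^{-1}$ and invoke non-degeneracy to rule out the spurious solutions in which $g_\rho(e^1,e^3)$ or $g_\rho(e^1,e^4)$ vanishes, while keeping careful track of which contractions survive the rank-one restriction of the induced form on the $e^{1j}$-plane — this bookkeeping, rather than any single conceptual point, is where the difficulty lies.
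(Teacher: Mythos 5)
The parts of your argument that you actually carry out are correct: from Lemmas~\ref{lem:grhozero1} and \ref{lem:grhozero2} one does get $\Ric(g_\rho)(e_7,e_7)=-\tfrac12\,g_\rho(e_4,e_7)^2\,g_\rho(e^1,e^3)^2$ and $\Ric(g_\rho)(e_6,e_6)=-\tfrac12\bigl(g_\rho(e_4,e_6)\,g_\rho(e^1,e^3)+g_\rho(e_5,e_6)\,g_\rho(e^1,e^4)\bigr)^2$, the system $g_\rho(de_7^\flat,de^k)=2\lambda\,\delta_{7k}$ is a legitimate consequence of \eqref{eq:Riccicurvature-lie} and the Einstein condition, and inside the branch $g_\rho(e_4,e_7)=0$ these relations do force $g_\rho(e_3,e_7)\neq0$, then $g_\rho(e^1,e^3)=0$, $g_\rho(e^1,e^4)\neq0$, and then $g_\rho(e_5,e_6)=0$. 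But the proof stops exactly where it must produce a contradiction, and the follow-up identities you name cannot do it: with $g_\rho(e^1,e^3)=0$ and $g_\rho(e_5,e_6)=0$, the relation coming from $\Ric(g_\rho)(e_5,e_6)$ collapses to $0=0$ (both the $d$-term and the $\ad$-term vanish identically), while the one from $\Ric(g_\rho)(e_5,e_5)$ gives only the dichotomy $g_\rho(e_5,e_5)=0$ or $g_\rho(e_5,e_5)=-2\lambda/g_\rho(e^1,e^4)^2$ --- no contradiction. You concede that this elimination is ``where the difficulty lies'', but that elimination \emph{is} the lemma, and there is no reason it can succeed with the inputs you allow yourself.

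The missing idea is that this is not a linear-algebra statement about Einstein metrics satisfying Lemmas~\ref{lem:grhozero1}--\ref{lem:grhozero2}; closedness of $\rho$ must be used again, pointwise and algebraically. The paper evaluates $b_\rho$ on the general closed $3$-form \eqref{eq:closedform} and finds
\[
b_\rho(e_4,e_7)=\tfrac12\,c_{167}^2\,(-c_{157}+c_{167}+c_{237}),\qquad
b_\rho(e_3,e_7)=\tfrac12\,c_{167}\,c_{237}\,(-c_{157}+c_{167}+c_{237}),
\]
so $g_\rho(e_4,e_7)=0$ \emph{forces} $g_\rho(e_3,e_7)=0$, hence $de_7^\flat=0$, hence $\Ric(g_\rho)(e_7,\cdot)=0$, hence $e_7^\flat=0$ by the non-Ricci-flat Einstein condition, contradicting non-degeneracy (this last mechanism is the one step you and the paper share). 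Your surviving branch $g_\rho(e_4,e_7)=0$, $g_\rho(e_3,e_7)\neq0$ is therefore empty for closed forms, but nothing in your toolkit can detect this: you are effectively trying to prove the statement for a strictly larger class of pseudo-metrics, for which it may simply be false. Similarly, the second assertion follows in the paper from the identity $b_\rho(e_5,e_6)=-2\,b_\rho(e_4,e_7)$, valid for \emph{every} closed $3$-form on $\frg$; your claim that the case $g_\rho(e_5,e_6)=0$ is ``treated symmetrically'' has no basis, since the structure equations \eqref{def:Lie} admit no symmetry exchanging $(e_4,e_7)$ with $(e_5,e_6)$, and without the displayed identity the two halves of the lemma are not linked at all.
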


\begin{proof}
Assume that $g_\rho(e_4,e_7)=0$ or, equivalently, $b_\rho(e_4,e_7)=0$. 
Then we see that $g_\rho(e_3,e_7)$ also vanishes. In fact, since $\rho$ is a closed 3-form on $\frg$, the 3-form $\rho$ can be expressed as in~\eqref{eq:closedform}.
Proceeding as in the proof of Lemma~\ref{lem:grhozero1}, we obtain 
\[
b_\rho(e_4,e_7)=\frac12\, c_{167}^2 \left(-c_{157}+c_{167}+c_{237}\right),
\]
and 
\[
b_\rho(e_3,e_7)=\frac12 \,c_{167}\, c_{237} \left(-c_{157}+c_{167}+c_{237}\right).
\]
Thus, $b_\rho(e_3,e_7)=0$ or, equivalently, $g_\rho(e_3,e_7)=0$ if $g_\rho(e_4,e_7)=0$.

Moreover, from Lemma~\ref{lem:grhozero1} we know that $g_\rho(e_i, e_7)=0$, for 
$i=5,6,7$. Consequently, if $g_\rho(e_4,e_7)=0$ (and so $g_\rho(e_3,e_7)=0$)  we have $e_7^\flat = \iota_{e_7} g_\rho = g_\rho(e_1,e_7)\, e^1 + g_\rho(e_2,e_7) \,e^2.$ 
Hence, 
$$
de_7^\flat= g_\rho(e_1,e_7)\, de^1 + g_\rho(e_2,e_7) \,de^2 =0,
$$
where the last equality is due to  \eqref{def:Lie}. 
But, by~\eqref{eq:Riccicurvature-lie}, $d(e_7^\flat)=0$ implies
$$
\Ric(g_\rho)(e_7, v)=0,
$$ 
for any $v\in \gg$. Now, using that $g_\rho$ is a non Ricci-flat Einstein pseudo-metric,
i.e. $\Ric(g_\rho)=\lambda\, g_\rho$ with $\lambda$ a nonzero constant, we conclude that 
$g_\rho(e_7,v)=0$, for any $v\in \gg$. But this contradicts that $g_\rho$ is the pseudo-metric induced by a $\G_2^\ast$-form.
Therefore, 
$$
g_\rho(e_4,e_7)\not=0. 
$$
 Moreover, since $\rho$ is a closed 3-form on $\gg$, from  \eqref{eq:closedform},  \eqref{eq:e6-0} and \eqref{eq:e5}, we have
$$
b_\rho(e_5,e_6)= - c_{167}^2 \left(-c_{157}+c_{167}+c_{237}\right)=-2\,b_\rho(e_4,e_7),
$$
which implies that $b_\rho(e_5,e_6)$ or, equivalently, $g_\rho(e_5,e_6)$ is nonzero because $g_\rho(e_4,e_7)$ is nonzero.
\end{proof}

\begin{lemma}\label{lem:grhozero3}
Let $\rho$ be a closed $\G_2^\ast$-form on $\gg$ such that the induced pseudo-metric $g_\rho$ is  Einstein and non Ricci-flat. Then, $g_\rho(e^1,e^3)$ and $g_\rho(e^1,e^4)$ must be zero.
\end{lemma}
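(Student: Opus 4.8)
The plan is to derive both vanishing statements from the Ricci formula \eqref{eq:Riccicurvature}, exploiting the Einstein condition $\Ric(g_\rho)=\lambda\,g_\rho$ (with $\lambda\neq0$) together with the facts already secured in Lemmas \ref{lem:grhozero1}, \ref{lem:grhozero2} and \ref{lem:grhononzero}. The guiding observation is that, by Lemma \ref{lem:grhozero1}, both $g_\rho(e_7,e_7)$ and $g_\rho(e_6,e_6)$ vanish, so the Einstein condition forces $\Ric(g_\rho)(e_7,e_7)=0$ and $\Ric(g_\rho)(e_6,e_6)=0$. I will read off $g_\rho(e^1,e^3)=0$ from the first identity and $g_\rho(e^1,e^4)=0$ from the second.

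For $g_\rho(e^1,e^3)=0$: since $e_7$ is central, \eqref{eq:Riccicurvature-lie} gives $\Ric(g_\rho)(e_7,e_7)=\tfrac12\,g_\rho(de_7^\flat,de_7^\flat)$. By Lemma \ref{lem:grhozero1} we have $g_\rho(e_7,e_5)=g_\rho(e_7,e_6)=g_\rho(e_7,e_7)=0$, so $e_7^\flat=\sum_{i=1}^{4}g_\rho(e_7,e_i)\,e^i$ and hence, from \eqref{def:Lie}, $de_7^\flat=g_\rho(e_7,e_3)\,e^{12}+g_\rho(e_7,e_4)\,e^{13}$. Expanding $g_\rho(de_7^\flat,de_7^\flat)$ with the induced metric on $\Lambda^2\frg^*$ and using $g_\rho(e^1,e^1)=g_\rho(e^1,e^2)=0$ (Lemma \ref{lem:grhozero2}), every pairing involving $e^{12}$ drops out and one is left with $g_\rho(de_7^\flat,de_7^\flat)=-\,g_\rho(e_4,e_7)^2\,g_\rho(e^1,e^3)^2$. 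Since the left-hand side equals $\lambda\,g_\rho(e_7,e_7)=0$ and $g_\rho(e_4,e_7)\neq0$ by Lemma \ref{lem:grhononzero}, I conclude $g_\rho(e^1,e^3)=0$.

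For $g_\rho(e^1,e^4)=0$ I apply the full formula \eqref{eq:Riccicurvature} to the non-central vector $e_6$. From \eqref{def:Lie} the only bracket involving $e_6$ comes from the term $e^{16}$ in $de^7$, so $\ad(e_6)$ is a multiple of $e^1\otimes e_7$; consequently $g_\rho(\ad(e_6),\ad(e_6))=g_\rho(e^1,e^1)\,g_\rho(e_7,e_7)=0$ by Lemmas \ref{lem:grhozero2} and \ref{lem:grhozero1}, and $\Ric(g_\rho)(e_6,e_6)=\tfrac12\,g_\rho(de_6^\flat,de_6^\flat)$. Again by Lemma \ref{lem:grhozero1}, $g_\rho(e_6,e_6)=g_\rho(e_6,e_7)=0$, hence $e_6^\flat=\sum_{i=1}^{5}g_\rho(e_6,e_i)\,e^i$ and $de_6^\flat=g_\rho(e_6,e_3)\,e^{12}+g_\rho(e_6,e_4)\,e^{13}+g_\rho(e_6,e_5)\,e^{14}$. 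Using Lemma \ref{lem:grhozero2} together with $g_\rho(e^1,e^3)=0$ from the previous step, all six 2-form inner products collapse except $g_\rho(e^{14},e^{14})=-\,g_\rho(e^1,e^4)^2$, which yields $g_\rho(de_6^\flat,de_6^\flat)=-\,g_\rho(e_5,e_6)^2\,g_\rho(e^1,e^4)^2$. As $\Ric(g_\rho)(e_6,e_6)=\lambda\,g_\rho(e_6,e_6)=0$ and $g_\rho(e_5,e_6)\neq0$ by Lemma \ref{lem:grhononzero}, I conclude $g_\rho(e^1,e^4)=0$.

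The step I expect to be most delicate is the second one. One must check that the $\ad(e_6)$ contribution genuinely disappears, which hinges on both $g_\rho(e_7,e_7)=0$ and $g_\rho(e^1,e^1)=0$, and one must keep careful track of which of the six pairings in $g_\rho(de_6^\flat,de_6^\flat)$ survive. The decisive point is that the $e^{14}$ summand of $de_6^\flat$—arising from $de^5=e^{14}$ in \eqref{def:Lie} and carrying the nonzero coefficient $g_\rho(e_5,e_6)$—is the unique term whose self-pairing is not annihilated by $g_\rho(e^1,e^1)=g_\rho(e^1,e^2)=g_\rho(e^1,e^3)=0$; it is precisely this term that isolates $g_\rho(e^1,e^4)$. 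This is also why the order matters: $g_\rho(e^1,e^3)=0$ must be established first in order to kill the $e^{13}$-pairings appearing in $g_\rho(de_6^\flat,de_6^\flat)$.
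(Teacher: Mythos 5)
Your proof is correct and follows essentially the same route as the paper's: both parts exploit $\Ric(g_\rho)(e_7,e_7)=0$ and $\Ric(g_\rho)(e_6,e_6)=0$ via the Conti--Rossi Ricci formula, compute $de_7^\flat$ and $de_6^\flat$ from Lemmas \ref{lem:grhozero1} and \ref{lem:grhononzero}, and collapse the 2-form pairings using Lemma \ref{lem:grhozero2} (and, for the second part, the previously established $g_\rho(e^1,e^3)=0$), exactly as in the paper. Your closing remarks on why the $\ad(e_6)$ term vanishes and why the order of the two steps matters match the paper's logic precisely.
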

\begin{proof}
Firstly we prove that $g_\rho(e^1,e^3)=0$.
From Lemma~\ref{lem:grhozero1} we know that $g_\rho(e_7,e_7)=0$. Thus, $\Ric(g_\rho)(e_7,e_7)=0$
since the pseudo-metric $g_\rho$ is Einstein.
Then, by \eqref{eq:Riccicurvature-lie},
\begin{equation}\label{eqn:ric7}
g_{\rho}(d e_7^\flat, de_7^\flat) =0.
\end{equation}
On the other hand, we determine $d e_7^\flat$ as follows. We know that $g_\rho(e_i, e_7)=0$ $(i=5,6,7)$ by Lemma~\ref{lem:grhozero1}, but $g_\rho(e_4,e_7)\not=0$
by Lemma~\ref{lem:grhononzero}. Hence, $e_7^\flat\ = \iota_{e_7} g_\rho = g_\rho(e_1,e_7)\, e^1 + g_\rho(e_2,e_7) \,e^2 + g_\rho(e_3,e_7) \,e^3 + g_\rho(e_4,e_7) \,e^4.$
So, by \eqref{def:Lie}, 
$$
de_7^\flat\ = g_\rho(e_3,e_7) \, de^3 + g_\rho(e_4,e_7) \, de^4 =  g_\rho(e_3, e_7)\, e^{12} + g_\rho(e_4,e_7)\,e^{13}.
$$
Therefore,
$$
\begin{aligned}
& g_\rho(d e_7^\flat, de_7^\flat) = g_\rho(e_3,e_7)^2  g_\rho(e^{12}, e^{12}) + 2\, g_\rho(e_3, e_7) g_\rho(e_4,e_7)\, g_\rho(e^{12}, e^{13}) \\
& \,\,\,\,\,\, \,\,\,\, \,\,\,\, \,\,\,\, \,\,\,\, \,\,\,\, \,\,\,\, \,\,\,\, + g_\rho(e_4,e_7)^2 g_\rho(e^{13}, e^{13})\\
& \,\,\,\,\,\, \,\,\,\, \,\,\,\, \,\,\,\, \,\,\,\, \,\,\,\, \,\,\,\, \,\,\,\,  = - g_\rho(e_4,e_7)^2 g_\rho(e^{1}, e^{3})^2,
\end{aligned}
$$
where the last equality follows from Lemma \ref{lem:grhozero2}. In fact, the equalities $g_\rho(e^{1}, e^{1}) =0=g_\rho(e^{1}, e^{2})$ imply $g_\rho(e^{12}, e^{12}) =0=g_\rho(e^{12}, e^{13})$,
and $g_\rho(e^{13}, e^{13}) = g_\rho(e^{1}, e^{1}) g_\rho(e^{3}, e^{3}) - g_\rho(e^{1}, e^{3})^2 = - g_\rho(e^{1}, e^{3})^2$. 
Now, from \eqref{eqn:ric7} we have 
$$
g_\rho(d e_7^\flat, de_7^\flat) = - g_\rho(e_4,e_7)^2 g_\rho(e^{1}, e^{3})^2=0.
$$
Since $g_\rho$ is a non Ricci-flat Einstein pseudo-metric induced by a closed $\G_2^\ast$-structure on $\gg$, then $g_\rho(e_4,e_7)$ is nonzero 
by Lemma~\ref{lem:grhononzero}. So $g_\rho(e^1,e^3)$ vanishes.

To prove that $g_\rho(e^1,e^4)=0$, we proceed as in the previous proof that $g_\rho(e^1,e^3)=0$.
From Lemma~\ref{lem:grhozero1} we know that $g_\rho(e_6,e_6)=0$. Thus, $\Ric(g_\rho)(e_6,e_6)=0$
because the pseudo-metric $g_\rho$ is Einstein.
Then, by \eqref{eq:Riccicurvature},
\begin{equation}\label{eqn:ric6}
g_{\rho}(d e_6^\flat, de_6^\flat) - g_{\rho}(\ad(e_6),\ad(e_6)) =0.
\end{equation}
We determine each term on the left-hand side of \eqref{eqn:ric6}. We begin with $g_{\rho}(d e_6^\flat, de_6^\flat)$. 
 By Lemma~\ref{lem:grhozero1}, $g_\rho(e_i, e_6)=0$, for $i=6,7,$ but $g_\rho(e_5,e_6)\not=0$
by Lemma~\ref{lem:grhononzero}. Hence, $e_6^\flat\ = \iota_{e_6} g_\rho = g_\rho(e_1,e_6)\, e^1 + g_\rho(e_2,e_6) \,e^2 + g_\rho(e_3,e_6) \,e^3 + g_\rho(e_4,e_6) \,e^4.
+ g_\rho(e_5,e_6) \,e^5$. So, using \eqref{def:Lie}, 
$$
\begin{aligned}
& de_6^\flat\ = g_\rho(e_3,e_6) \, de^3 + g_\rho(e_4,e_6) \, de^4 + g_\rho(e_5,e_6) \,de^5 \\
& \,\,\,\,\,\, \,\,\,\,=  g_\rho(e_3, e_6)\, e^{12} + g_\rho(e_4,e_6)\,e^{13} + g_\rho(e_5,e_6) \,e^{14}.
\end{aligned}
$$
Therefore, 
$$
\begin{aligned}
& g_{\rho}(d e_6^\flat, de_6^\flat) = g_\rho(e_3,e_6)^2 \, g_\rho(e^{12}, e^{12})+ g_\rho(e_4,e_6)^2 \, g_\rho(e^{13}, e^{13}) \\
& \,\,\,\,\,\, \,\,\,\, \,\,\,\,\,\, \,\,\,\, \,\,\,\,\,\, \,\,\,\, + g_\rho(e_5,e_6)^2 \, g_\rho(e^{14}, e^{14})  + 2 \, g_\rho(e_3, e_6) \,g_\rho(e_4,e_6)\, g_\rho(e^{12}, e^{13}) \\
& \,\,\,\,\,\, \,\,\,\, \,\,\,\,\,\, \,\,\,\, \,\,\,\,\,\, \,\,\,\, + 2\, g_\rho(e_3, e_6) \, g_\rho(e_5,e_6) \, g_\rho(e^{12}, e^{14}) + 2 \,  g_\rho(e_4, e_6) \, g_\rho(e_5,e_6) \, g_\rho(e^{13}, e^{14}).
\end{aligned}
$$
From Lemma \ref{lem:grhozero2} we have $g_\rho(e^{12}, e^{1m}) =0$, for $m=2, 3, 4$. Moreover, Lemma \ref{lem:grhozero2} and the equality $g_\rho(e^1,e^3)=0$ imply that
$g_\rho(e^{13}, e^{1n}) =0$, for $n=3, 4$. Thus, taking into account that $g_\rho(e^{14}, e^{14}) = - g_\rho(e^{1}, e^{4})^2$, we obtain
\begin{equation}\label{eqn:ric6-6}
g_{\rho}(d e_6^\flat, de_6^\flat) =  - g_\rho(e_5,e_6)^2  \, g_\rho(e^{1}, e^{4})^2. 
\end{equation}
 On the other hand, from \eqref{def:Lie} it follows that $\ad(e_6)=e^1\otimes e_7$. Hence,
 $$
 g_{\rho}(\ad(e_6),\ad(e_6))= g_{\rho}(e^1\otimes e_7, e^1\otimes e_7) = g_{\rho}(e^1,e^1)\, g_{\rho}(e_7,e_7) =0,
 $$
 where in the last equality we use that $g_{\rho}(e_7,e_7)=0$ by Lemma~\ref{lem:grhozero1}.
Since $g_{\rho}(\ad(e_6),\ad(e_6)) = 0$, \eqref{eqn:ric6} becomes
 $$
 g_{\rho}(d e_6^\flat, de_6^\flat) = 0.
 $$
 Then, by \eqref{eqn:ric6-6}, we obtain
 $$
 g_\rho(e_5,e_6)^2g_\rho(e^1,e^4)^2 =0, 
 $$
 which implies $g_\rho(e^1,e^4)=0$ because $g_\rho$ is a non Ricci-flat Einstein pseudo-metric induced by a closed $\G_2^\ast$-structure on $\gg$, 
 and so $g_\rho(e_5,e_6)\not=0$ by Lemma~\ref{lem:grhononzero}.
\end{proof}

To prove the main result of this section, we will need also the following.

\begin{proposition}{\rm{(}}\cite[Proposition 2.4]{CR1}{\rm{)}} \label{pro:obstruction2}
Let  $\frh$ be a nilpotent Lie algebra and let $h$ be a pseudo-metric on $\frh$. Let $\ad(\frh)$ be the image of $\frh$ in $\gz(\frh)^\circ\otimes \frh^1,$
where $\frh^1 = [\frh, \frh]$, and $\gz(\frh)^\circ$ is the annihilator of the center $\gz(\frh)$ of $\frh$ in $\frh^*$. Let $d(\frh^\ast)$ be the image 
of $\frh^\ast$ in $\bigwedge^2 \gz(\frh)^\circ$. Let ${\mathcal M}$ be the null space of $\ad(\frh)$ and let ${\mathcal N}$ be the null space of $d(\frh^\ast)$. If
\begin{equation}\label{eq:dimMN}
\dim {\mathcal M}+\dim {\mathcal N} \geq \dim \frh^1 - \dim \gz(\frh),
\end{equation}
then $h$ is not Einstein unless it is Ricci-flat.
\end{proposition}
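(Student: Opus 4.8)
The plan is to build everything on the Ricci formula of Proposition~\ref{pro:obstruction1}, which applies to $\frh$ because a nilpotent Lie algebra is unimodular with vanishing Killing form. Writing $Q(u,v)=h(du^\flat,dv^\flat)$ and $P(u,v)=h(\ad(u),\ad(v))$ for the two symmetric bilinear forms appearing in \eqref{eq:Riccicurvature}, I have $2\,\Ric(h)=Q-P$. The first step is to translate the hypothesis \eqref{eq:dimMN} into a statement about the radicals of $Q$ and $P$. I consider the linear maps $\delta\colon\frh\to\Lambda^2\frh^\ast$, $u\mapsto du^\flat$, and $\ad\colon\frh\to\ad(\frh)$. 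Since $u\mapsto u^\flat$ is an isomorphism, $\delta$ has image $d(\frh^\ast)$ and kernel of dimension $\dim\frh-\dim\frh^1$, while $\ad$ has image $\ad(\frh)$ and kernel $\gz(\frh)$. A vector $u$ lies in $\mathrm{rad}(Q)$ iff $\delta(u)$ lies in the null space $\mathcal N$ of $d(\frh^\ast)$ (the radical of $h$ restricted to $d(\frh^\ast)$), and in $\mathrm{rad}(P)$ iff $\ad(u)$ lies in the null space $\mathcal M$ of $\ad(\frh)$; hence
\[
\dim\mathrm{rad}(Q)=\dim\frh-\dim\frh^1+\dim\mathcal N,\qquad
\dim\mathrm{rad}(P)=\dim\gz(\frh)+\dim\mathcal M .
\]
Adding these, I see that \eqref{eq:dimMN} is \emph{equivalent} to $\dim\mathrm{rad}(Q)+\dim\mathrm{rad}(P)\ge\dim\frh$.

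Next I argue by contradiction, assuming $h$ is Einstein with $\Ric(h)=\lambda\,h$ and $\lambda\neq0$. Then $\Ric(h)$ is nondegenerate, so any $u\in\mathrm{rad}(Q)\cap\mathrm{rad}(P)$, which satisfies $\Ric(h)(u,\cdot)=\tfrac12(Q-P)(u,\cdot)=0$, must vanish. Thus $\mathrm{rad}(Q)\cap\mathrm{rad}(P)=0$, giving $\dim\mathrm{rad}(Q)+\dim\mathrm{rad}(P)\le\dim\frh$. Combined with the equivalent form of the hypothesis, this forces equality, so $\frh=\mathrm{rad}(Q)\oplus\mathrm{rad}(P)$. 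Moreover, for $u\in\mathrm{rad}(Q)$ and $v\in\mathrm{rad}(P)$ one has $\Ric(h)(u,v)=\tfrac12(Q(u,v)-P(u,v))=0$, so $\lambda\,h(u,v)=0$ and hence $h(u,v)=0$: the decomposition is $h$-orthogonal and $h$ is nondegenerate on each summand.

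The final step isolates $\lambda$ by a trace computation. On $\mathrm{rad}(P)$ the form $P$ vanishes, so there $Q=2\,\Ric(h)=2\lambda\,h$, while $Q$ vanishes on $\mathrm{rad}(Q)$; evaluating $\tr_h$ in a pseudo-orthonormal basis adapted to the orthogonal splitting gives $\tr_h Q=2\lambda\dim\mathrm{rad}(P)$, and symmetrically $\tr_h P=-2\lambda\dim\mathrm{rad}(Q)$. The key input is the basis-independent identity $\tr_h Q=\tfrac12\,\tr_h P$, which I would verify by expanding both traces in a pseudo-orthonormal frame $\{f_i\}$ with $h(f_i,f_i)=\epsilon_i$: with structure constants $c^k_{ij}$ one finds $\tr_h P=\sum_{i,j,k}\epsilon_i\epsilon_j\epsilon_k (c^k_{ij})^2$ and $\tr_h Q=\tfrac12\sum_{i,j,k}\epsilon_i\epsilon_j\epsilon_k (c^k_{ij})^2$, the two expressions differing only by the factor $\tfrac12$. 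Substituting the two evaluations yields $2\lambda\dim\mathrm{rad}(P)=-\lambda\dim\mathrm{rad}(Q)$, that is $\lambda\bigl(\dim\mathrm{rad}(P)+\dim\frh\bigr)=0$. Since $\dim\frh>0$, this forces $\lambda=0$, contradicting $\lambda\neq0$; hence $h$ is Ricci-flat.

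The main obstacle is exactly the boundary case of \eqref{eq:dimMN}: the radical dimension count alone yields a contradiction only when the inequality is strict, and it is the trace identity $\tr_h Q=\tfrac12\,\tr_h P$---a genuine constraint linking the two pieces of the Ricci tensor, both built from the same bracket---that disposes of the equality case by forcing $\lambda=0$ on the orthogonally split algebra. I would therefore treat this identity as the technical heart of the argument, checking the sign bookkeeping in the indefinite ($\epsilon_i=\pm1$) setting with care, since its validity independently of the signature is precisely what makes the pseudo-Riemannian statement go through.
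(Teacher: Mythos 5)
You cannot be checked against ``the paper's own proof'' here, because the paper gives none: Proposition \ref{pro:obstruction2} is quoted from Conti--Rossi \cite[Proposition 2.4]{CR1} and used as a black box in the proof of Theorem \ref{thm:thmEinstein2}. Judged on its own merits, your argument is correct and complete. Writing $2\Ric(h)=Q-P$ via Proposition \ref{pro:obstruction1}, your identifications $\mathrm{rad}(Q)=\{u\in\frh : du^\flat\in\mathcal N\}$ and $\mathrm{rad}(P)=\{u\in\frh : \ad(u)\in\mathcal M\}$ are the right reading of ``null space'' (it is exactly how $\mathcal M$ and $\mathcal N$ are used in the proof of Theorem \ref{thm:thmEinstein2}), and the counts $\dim\mathrm{rad}(Q)=\dim\frh-\dim\frh^1+\dim\mathcal N$, $\dim\mathrm{rad}(P)=\dim\gz(\frh)+\dim\mathcal M$ do convert \eqref{eq:dimMN} into $\dim\mathrm{rad}(Q)+\dim\mathrm{rad}(P)\ge\dim\frh$. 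The step from nondegeneracy of $\Ric(h)=\lambda h$ ($\lambda\neq 0$) to the $h$-orthogonal splitting $\frh=\mathrm{rad}(Q)\oplus\mathrm{rad}(P)$, with $h$ nondegenerate on each factor, is sound. Most importantly, the trace identity you flag as the crux, $\tr_h Q=\tfrac12\tr_h P$, does hold in arbitrary signature: in a pseudo-orthonormal frame one gets $h(df^k,df^k)=\sum_{i<j}\epsilon_i\epsilon_j(c^k_{ij})^2$, whence $\tr_h Q=\tfrac12\sum_{i,j,k}\epsilon_i\epsilon_j\epsilon_k(c^k_{ij})^2$, while $\tr_h P=\sum_{i,j,k}\epsilon_i\epsilon_j\epsilon_k(c^k_{ij})^2$; the factor $\tfrac12$ comes solely from the antisymmetry of the bracket, not from any positivity, so the indefinite signs cause no trouble. (Equivalently, your identity is the statement $\mathrm{scal}(h)=-\tfrac14\sum_{i,j,k}\epsilon_i\epsilon_j\epsilon_k(c^k_{ij})^2$, the pseudo-Riemannian analogue of Milnor's scalar-curvature formula for nilpotent Lie groups.) Your endgame $\lambda\bigl(\dim\mathrm{rad}(P)+\dim\frh\bigr)=0$ then forces $\lambda=0$, as claimed. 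In substance this reconstructs the mechanism of Conti--Rossi's original argument, which likewise combines the Ricci formula \eqref{eq:Riccicurvature} with a trace constraint; so you have supplied, correctly, the proof that this paper omits rather than a genuinely new route to it.
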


\begin{theorem}\label{thm:thmEinstein2}
The Lie algebra $\frg$ defined in \eqref{def:Lie} has a closed $\G_2^\ast$-structure,  
but $\frg$ does not admit any closed $\G_2^\ast$-structure $\rho$ such that the induced pseudo-metric $g_{\rho}$ is Einstein and non Ricci-flat.

In particular, the non Ricci-flat Einstein pseudo-metric given in  \eqref{def:einsteinmet} is not determined by any closed $\G_2^\ast$-structure
on $\frg$.
\end{theorem}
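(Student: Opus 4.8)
The plan is to establish the two assertions separately. For the existence of a closed $\G_2^*$-structure, I would exhibit an explicit stable $3$-form: since every closed $3$-form on $\frg$ has the shape \eqref{eq:closedform}, it suffices to choose numerical values of the free parameters $c_{ijk}$ for which the symmetric form $b_\rho$ of \eqref{def:b} is non-degenerate and the induced $g_\rho$ is indefinite of signature $(3,4)$; then $\rho$ is automatically closed, and by Hitchin's criterion it defines a $\G_2^*$-structure. This is a finite check with a single explicit form.

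For the second (and main) assertion I would argue by contradiction, assuming that $\frg$ carries a closed $\G_2^*$-structure $\rho$ whose pseudo-metric $g_\rho$ is Einstein and non Ricci-flat, and then derive a contradiction from Proposition \ref{pro:obstruction2}. Under this assumption all of Lemma \ref{lem:grhozero1}, Lemma \ref{lem:grhozero2}, Lemma \ref{lem:grhononzero} and Lemma \ref{lem:grhozero3} apply, so in particular $g_\rho(e^1,e^j)=0$ for $j=1,2,3,4$ and $g_\rho(e_7,e_k)=0$ for $k=5,6,7$. From \eqref{def:Lie} one reads off $\frg^1=[\frg,\frg]=\langle e_3,e_4,e_5,e_6,e_7\rangle$ and $\gz(\frg)=\langle e_7\rangle$, so the right-hand side of \eqref{eq:dimMN} equals $\dim\frg^1-\dim\gz(\frg)=4$. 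The goal is then to show $\dim\mathcal{M}+\dim\mathcal{N}\geq 4$, which by Proposition \ref{pro:obstruction2} forces $g_\rho$ to be Ricci-flat, contradicting the hypothesis.

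To produce the count I would exhibit explicit elements of the two null spaces. For $\mathcal{N}$, the radical of $g_\rho$ on $d(\frg^*)$, I claim that $de^3=e^{12}$, $de^4=e^{13}$ and $de^5=e^{14}$ all lie in $\mathcal{N}$: using the determinant formula for the induced metric on $\Lambda^2\frg^*$ together with the vanishing of $g_\rho(e^1,e^1),g_\rho(e^1,e^2),g_\rho(e^1,e^3),g_\rho(e^1,e^4)$, every pairing of $e^{12},e^{13},e^{14}$ against the spanning set $\{e^{12},e^{13},e^{14},e^{15}+e^{23},e^{16}+e^{23}+e^{24}\}$ of $d(\frg^*)$ vanishes, since each surviving $2\times 2$ minor carries a factor $g_\rho(e^1,e^a)$ with $a\in\{1,2,3,4\}$. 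As these three forms are linearly independent, $\dim\mathcal{N}\geq 3$. For $\mathcal{M}$, the radical of $g_\rho$ on $\ad(\frg)$, I would use $\ad(e_6)=e^1\otimes e_7$ (read off from \eqref{def:Lie}) and verify $g_\rho(\ad(e_6),\ad(e_j))=g_\rho(e^1,\,\cdot\,)\,g_\rho(e_7,\,\cdot\,)=0$ for every $j$: in each contributing term either the covector factor is one of $g_\rho(e^1,e^1),g_\rho(e^1,e^2),g_\rho(e^1,e^3)$ (all zero by Lemmas \ref{lem:grhozero2} and \ref{lem:grhozero3}), or the vector factor is one of $g_\rho(e_7,e_5),g_\rho(e_7,e_6),g_\rho(e_7,e_7)$ (all zero by Lemma \ref{lem:grhozero1}). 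Since $\ad(e_6)\neq 0$, this gives $\dim\mathcal{M}\geq 1$.

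Combining the two bounds, $\dim\mathcal{M}+\dim\mathcal{N}\geq 4$, so \eqref{eq:dimMN} holds and Proposition \ref{pro:obstruction2} produces the desired contradiction, proving that no closed $\G_2^*$-structure on $\frg$ induces a non Ricci-flat Einstein pseudo-metric. The final sentence is then immediate: the metric $g$ of \eqref{def:einsteinmet} is Einstein and non Ricci-flat by Theorem \ref{th:EinsteinnotRicciflat}, hence cannot arise from a closed $\G_2^*$-structure. I expect the main obstacle here to be bookkeeping rather than conceptual: the delicate point is that precisely the four lemmas, and no weaker set of conditions, conspire to force $de^3,de^4,de^5$ into $\mathcal{N}$ and $\ad(e_6)$ into $\mathcal{M}$, raising the isotropy count exactly to the threshold $4$ that triggers the Conti--Rossi obstruction.
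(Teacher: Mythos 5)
Your argument for the main (non-existence) assertion is correct and is essentially the paper's own proof: contradiction via the Conti--Rossi obstruction, Proposition \ref{pro:obstruction2}, fed by the four lemmas. Your null-space computations check out: the pairings of $e^{12},e^{13},e^{14}$ against the spanning set of $d(\frg^*)$ all carry a factor $g_\rho(e^1,e^a)$ with $a\in\{1,2,3,4\}$, giving $\dim\mathcal{N}\geq 3$, and every term of $g_\rho(\ad(e_6),\ad(e_j))$ is killed either by $g_\rho(e^1,e^a)=0$ or by $g_\rho(e_7,e_b)=0$ with $b\in\{5,6,7\}$, giving $\ad(e_6)\in\mathcal{M}$. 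The only (harmless) difference from the paper is that you stop at $\dim\mathcal{M}\geq 1$, reaching the threshold $1+3=4$ exactly, whereas the paper also shows $\ad(e_5)\in\mathcal{M}$ (using the isotropy of $\langle e_6,e_7\rangle$, including $g_\rho(e_6,e_6)=0$) and lands at $5\geq 4$; both satisfy \eqref{eq:dimMN}, so Proposition \ref{pro:obstruction2} forces Ricci-flatness and the contradiction follows.

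The one genuine gap is in the first assertion: you never actually produce a closed $\G_2^*$-structure. Saying ``it suffices to choose numerical values of the $c_{ijk}$ for which $b_\rho$ is non-degenerate and $g_\rho$ is indefinite'' presupposes exactly what has to be shown, namely that the (rather constrained) $16$-parameter family \eqref{eq:closedform} of closed $3$-forms meets the open set of stable forms of indefinite type; a priori it could consist entirely of degenerate forms. The paper closes this by construction: it defines $\varphi$ as the pullback of the model form \eqref{def G2*} through the explicit coframe \eqref{def:newbasis}, so stability and signature $(3,4)$ are automatic, and only $d\varphi=0$ needs to be verified, which it does term by term, yielding \eqref{def:G2*form}. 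Your search strategy would in fact succeed --- the paper's $\varphi$ corresponds to the parameter choice $c_{137}=1$, $c_{156}=2$, $c_{157}=-2$, $c_{237}=-1$ and all remaining $c_{ijk}=0$ in \eqref{eq:closedform} --- but until such a witness is exhibited and its stability and signature checked, the existence half of the theorem is not proved.
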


\begin{proof}
Let us consider the coframe $\{h^1,\ldots,h^7\}$ of $\frg^*$ given by
\begin{equation}\label{def:newbasis}
\begin{aligned}
& e^1 = \frac{1}{2} \left(-h^2+h^6\right), \quad e^2=-h^4+h^7, \quad  e^3 =h^3, \\
& e^4= -\frac{1}{2}\,h^1+h^3-h^4+\frac{1}{2}\,h^5+h^7, \quad \,\,\, e^5=h^1+\frac{1}{2}\,h^3+h^5, \\
& e^6 = h^2-2\,h^4+h^6+2\,h^7, \quad  \,\,\, e^7=\frac{1}{2} \left(4\,h^2-3\,h^4+5\,h^7\right).
\end{aligned}
\end{equation}
We define the $\G_2^*$-structure $\varphi$ on $\frg$ for which $\{h^1,\ldots,h^7\}$ is an adapted coframe, i.e. $\varphi$ is given by
$$
\varphi = - h^{127} - h^{347} + h^{567} + h^{135} - h^{146} - h^{236}  - h^{245}.
$$ 
One can check that the 3-form $\varphi$, in terms of the coframe $\{e^1,\ldots,e^7\}$ of $\frg^*$, has the following expression
\begin{equation}\label{def:G2*form}
\varphi=e^{137}+2\, e^{156}-2\, e^{157}+e^{235}-e^{237}+e^{246}+e^{345}.
\end{equation}
We claim that $\varphi$ is closed. Indeed, by the equations \eqref{def:Lie}, we have $d(e^{137} + e^{235}) = 0$,
$2\,d(e^{156} - e^{157}) = - 2\,e^{1245}$ and  $d(- e^{237} + e^{246} + e^{345}) = 2\,e^{1245}$. Thus, $\varphi$ 
defines a closed $\G_2^*$-structure on $\frg$.

Next, we are going to show that no closed $\G_2^*$-structure on $\frg$ induces a non Ricci-flat  Einstein pseudo-metric. 

Assume that  $\frg$ has a closed $\G_2^\ast$-structure $\rho$ such that the induced pseudo-metric $g_\rho$ is Einstein and non Ricci-flat,
i.e. $\Ric(g_\rho)=\lambda \, g_\rho$, for some nonzero constant. 
Firstly, we prove that the Lie algebra $\frg$, with the non Ricci-flat Einstein pseudo-metric $g_\rho$, satisfies 
 the inequality ~\eqref{eq:dimMN}. 

The right-side of~\eqref{eq:dimMN} can be computed directly from~\eqref{def:Lie}. We have 
$\gg^1=[\gg,\gg]= \langle e_3, e_4 ,e_5, e_6, e_7 \rangle$ and  $\gz(\frg) = \langle e_7 \rangle$. So, 
$\dim[\gg,\gg]-\dim \gz(\gg)=4$. 

Now, we show that $\dim {\mathcal M}\geq 2$ because $\ad(e_5), \ad(e_6)\in {\mathcal M}$. 
By  \eqref{def:Lie}, the space $\ad(\gg)$ is generated by
$\ad(\gg) \subset \gz(\frg)^\circ\otimes \frg^1 = \langle e^i \otimes e_j; \, 1 \leq i \leq 6 \, \text{ and }   3 \leq j \leq 7 \rangle$ is generated by 
\begin{equation}
\begin{split}\label{eq:adjointg}
\ad(e_1)&=-e^2\otimes e_3-e^3\otimes e_4-e^4\otimes e_5-e^5\otimes e_6-e^6\otimes e_7,\\
\ad(e_2)&= e^1\otimes e_3 -e^3\otimes e_6-e^3\otimes e_7-e^4\otimes e_7,\\
\ad(e_3)&= e^1 \otimes e_4  + e^2\otimes e_6  + e^2 \otimes e_7,\qquad \ad(e_4)= e^1 \otimes e_5 + e^2 \otimes e_7,\\
\ad(e_5)&= e^1\otimes e_6,\quad \,\,\,\,\  \ad(e_6)= e^1\otimes e_7,\quad \ad(e_7)=0.
\end{split}
\end{equation}
Clearly $\ad(e_5) = e^1\otimes e_6, \ad(e_6)= e^1\otimes e_7 \in \Lambda^{1}(e^1,e^2,e^3,e^4)\otimes \frg+\gg^\ast\otimes \langle e_6,e_7 \rangle$
since $\ad(e_5)$ and $\ad(e_6)$ belong to each of those terms.  Moreover, because $g_{\rho}$ is a non Ricci-flat Einstein pseudo-metric induced 
by a closed $\G_2^\ast$-structure on $\frg$, from Lemma~\ref{lem:grhozero2} and Lemma ~\ref{lem:grhozero3} we know that 
 $e^1$ is $g_{\rho}$-orthogonal to $e^1,e^2,e^3$ and to $e^4$. Then, 
 $\ad(e_5)$ and $\ad(e_6)$ are both $g_{\rho}$-orthogonal to $\Lambda^{1}(e^1,e^2,e^3,e^4)\otimes \frg$.
 Also Lemma~\ref{lem:grhozero1} implies that $e_6$ and $e_7$ generate an isotropic subspace of $\gg$ (with respect to the pseudo-metric $g_{\rho}$). 
 Thus,  $\ad(e_5)$ and $\ad(e_6)$ are both $g_{\rho}$-orthogonal to $\gg^\ast\otimes \langle e_6,e_7 \rangle$.
 So  $\ad(e_5)$ and $\ad(e_6)$ are also $g_{\rho}$-orthogonal to $\Lambda^{1}(e^1,e^2,e^3,e^4)\otimes \frg+\gg^\ast\otimes \langle e_6,e_7 \rangle$.
 But from~\eqref{eq:adjointg},
\[
\ad(\gg)\subseteq  \Lambda^{1}(e^1,e^2,e^3,e^4)\otimes \frg+\gg^\ast\otimes \langle e_6,e_7 \rangle.
\]
Therefore, $\ad e_5$ and $\ad e_6$ are in the null space of $\ad\gg$, i.e. $\ad(e_5), \ad(e_6) \in {\mathcal M}$, and hence $\dim {\mathcal M}\geq 2$.

We determine a lower bound for $\dim {\mathcal N}$ by proving that $de^3, de^4, de^5\in {\mathcal N}$. First, note that
$de^3, de^4, de^5\in\Lambda^1(e^1)\wedge \Lambda^1(e^2,e^3,e^4)$ by ~\eqref{def:Lie}. 
Using again that  $g_{\rho}$ is a non Ricci-flat Einstein pseudo-metric on $\frg$ determined by a closed $\G_2^\ast$-structure on $\frg$, 
from Lemma~\ref{lem:grhozero2} and Lemma ~\ref{lem:grhozero3}
we know that $e^1$ is $g_{\rho}$-orthogonal to $e^1,e^2,e^3$ and $e^4$, 
that is $g_\rho(e^1,e^i)=0$, for $1\leq i\leq 4$. Then,
$$
g_{\rho}(e^{1j}, e^{1k}) = g_{\rho}(e^{1}, e^{1})\, g_{\rho}(e^{j}, e^{k}) - g_{\rho}(e^{1}, e^{j}) \,g_{\rho}(e^{1}, e^{k}) =0,
$$
for $1\leq j\leq 4$ and $1 \leq k \leq 7$. So, $\Lambda^1(e^1)\wedge \Lambda^1(e^2,e^3,e^4)$ is $g_{\rho}$-orthogonal to
$\Lambda^1(e^1)\wedge \gg^\ast$. Moreover,
$$
g_{\rho}(e^{1j}, e^{pq}) = g_{\rho}(e^{1}, e^{p})\, g_{\rho}(e^{j}, e^{q}) - g_{\rho}(e^{1}, e^{q}) \, g_{\rho}(e^{j}, e^{p}) =0,
$$
for $2\leq j, p,q\leq 4$, which implies that $\Lambda^1(e^1)\wedge \Lambda^1(e^2,e^3,e^4)$ is $g_{\rho}$-orthogonal to $\Lambda^2(e^2,e^3,e^4)$.
Thus, $\Lambda^1(e^1)\wedge \Lambda^1(e^2,e^3,e^4)$ is $g_{\rho}$-orthogonal to $\Lambda^1(e^1)\wedge \gg^\ast \oplus  \Lambda^2(e^2,e^3,e^4)$,
because $\Lambda^1(e^1)\wedge \Lambda^1(e^2,e^3,e^4)$ is $g_{\rho}$-orthogonal to each of those terms.
But $d(\gg^\ast)\subseteq \Lambda^1(e^1)\wedge \gg^\ast \oplus  \Lambda^2(e^2,e^3,e^4)$.
Consequently, $de^3, de^4, de^5\in {\mathcal N}$, and hence $\dim {\mathcal N}\geq 3$.

Therefore, $\dim {\mathcal M}+\dim {\mathcal N} \geq 5 \geq \dim \frg^1 - \dim \gz(\frg) = 4$.
This shows that the Lie algebra $\frg$, with the pseudo-metric $g_\rho$, satisfies 
 the inequality ~\eqref{eq:dimMN}. Then, $g_\rho$ must be Ricci-flat by Proposition \ref{pro:obstruction2}, which is a contradiction with 
 our assumption on the pseudo-metric $g_\rho$.
\end{proof}

\begin{remark}
In section \ref{sec:ricci-flatclosedG2}, Example 1,  we show that the closed $\G_2^\ast$-structure $\varphi$ given in \eqref{def:G2*form} is harmonic, but not coclosed, and the 
induced pseudo-metric $g_{\varphi}$ is not Einstein.
\end{remark}

\section{Closed and harmonic $\G_2^*$-structures inducing Einstein pseudo-metrics}\label{sec:ricci-flatclosedG2}

Here we prove that if a closed and harmonic $\G_2^*$-structure $\varphi$ on a 7-manifold (not necessarily compact)
is such that the induced pseudo-Riemannian metric $g_{\varphi}$ is Einstein, then $g_{\varphi}$ is Ricci-flat.
Moreover, we give two examples of nilpotent Lie algebras, each of them endowed with a closed and harmonic $\G_2^*$-structure which is not coclosed. 
In one of these examples the induced pseudo-metric is not Einstein whereas the induced pseudo-metric on the other example is Ricci-flat and non-flat.
First, we need some results about $\G_2^*$-structures on manifolds.

The representation theory for $\G_2^*$-structures is similar to that of $\G_2$-structures, so that if
$V$ is a real vector space of dimension 7 with a  $\G_2^*$-structure $\varphi$, then
$\G_2^*$ acts irreducibly on $V,$ and hence on $\Lambda^1(V^*)$  and $\Lambda^6(V^*),$ but 
$\G_2^*$ acts reducibly on  $\Lambda^{p}(V^*),$ for $2 \leq p \leq 5$. Therefore, 
a $\G_2^*$-structure $\varphi$ on a 7-dimensional manifold $M$ induces splittings of the bundles $\Lambda^{p}T^*M$ 
of $p$-forms on $M$ into direct summands, which we denote by $\Lambda^{p}_{r}T^*M$, where $r$ is
the rank of the bundle. Let  $\Omega^{p}_{r}(M)$ be the space of sections of $\Lambda^{p}_{r}T^*M$. 
Then, the space $\Omega^p(M)$ of differential $p$-forms on $M$ $(p=2,3)$ admits the following decomposition 
\[
\renewcommand\arraystretch{1.2}
\begin{array}{rcl}
\Omega^2(M) &=& \Omega^2_{7} (M) \oplus \Omega^2_{14} (M),\\
\Omega^3(M) &=& \Omega^3_{1} (M)\oplus \Omega^3_7 (M)\oplus \Omega^3_{27}(M),
\end{array}
\renewcommand\arraystretch{1}
\]
where
 \begin{equation}\label{def:decomposition}
\begin{aligned}
& \Omega_{7}^2(M) =  \{ \star_{\varphi}(\alpha  \wedge \star_{\varphi} \varphi)  \, \mid  \, \alpha \in \Omega^1 (M) \},\\
& \Omega_{14}^2(M)=  \{ \beta \in \Omega^2 (M) \, \mid  \, \beta \wedge \varphi = - \star_{\varphi} \beta \}\\
& \, \quad \,  \qquad  = \{ \beta \in \Omega^2 (M) \, \mid  \, \beta \wedge  \star_{\varphi}\varphi = 0 \}, \\
& \Omega_{1}^3(M)=  \{ f\,\varphi \, \mid  \, f \in \Omega^0 (M)\},\\
& \Omega_{7}^3(M)=  \{  \star_{\varphi}(\alpha  \wedge \varphi)  \, \mid  \, \alpha \in \Omega^1 (M) \},\\
& \Omega^3_{27}  (M)= \{ \gamma \in \Omega^3 (M) \, \mid  \, \gamma \wedge \varphi =0= \gamma \wedge \star_{\varphi} \varphi \}.
\end{aligned}
\end{equation}
The Hodge star $\star_{\varphi} \colon \Omega^p (M) \to  \Omega^{7-p} (M)$ gives the corresponding decompositions of 
$\Omega^4(M)$ and $\Omega^5(M)$.

If one applies the splittings of the 4- and 5-forms to the differentials $d \varphi$ and $d \star_{\varphi}\varphi$, one may show
(see \cite{Br}) that
\begin{gather}\label{torsion}
\begin{cases}
d\varphi= \tau_0\,\star_{\varphi} \varphi +3\,\tau_1\wedge\varphi+\star_{\varphi} \tau_3,\\
d \star_{\varphi} \varphi= 4\,\tau_1\wedge\star_{\varphi} \varphi-\star_{\varphi} \tau_2,
\end{cases}
\end{gather}
where $\tau_0\in\Omega^0 (M), \tau_1\in\Omega^1 (M), \tau_{2}\in\Omega_{14}^2(M)$ and $\tau_3\in\Omega_{27}^3(M)$.

Therefore, if $\varphi$ is a closed $\G_2^*$-structure on $M$, i.e. $d\varphi = 0$,  then 
\eqref{torsion} implies that $\tau_0, \tau_1$ and $\tau_3$ are all zero, so the only non-zero torsion form is $\tau_2$, which
we  call the \emph{torsion 2-form of} $\varphi$.
For the rest of the article, we write $\tau=\tau_2$ for simplicity. 
Thus, if $\varphi$ is a closed $\G_2^*$-structure, by \eqref{def:decomposition} and \eqref{torsion}, 
\begin{equation}\label{torsionclosedG2star}
d\varphi= 0, \quad d \star_{\varphi} \varphi=  - \star_{\varphi} \tau  = \tau\wedge \varphi,
\end{equation}
since $\tau  \in \Omega_{14}^2(M)$.
Moreover, note that it can happen that $\left\|\tau\right\|_{g_{\varphi}}=0$, 
but this does not imply that $\tau =0$ because $g_{\varphi}$ has signature $(3,4)$.

Let $M$ be a 7-dimensional manifold with a $\G_2^*$-structure $\varphi$. We remind that
a differential $k$-form $\alpha$ on $M$ is said to be {\em harmonic} if
$$
 \Delta_{\varphi}  \alpha = 0,
$$
where $\Delta_{\varphi}$ is the Hodge Laplacian operator of the pseudo-Riemannian 
metric $g_{\varphi}$ determined by $\varphi$, that is, $\Delta_{\varphi}  \alpha = (d \delta +  \delta d) \alpha$ with 
$\delta  \alpha = (-1)^{7(k+1)+1} \star_{\varphi} d\, \star_{\varphi}\alpha$.

To prove the main result of this section, we need the following proposition, which shows a formula for the scalar curvature of 
the pseudo-Riemannian metric associated to a closed $\G_2^*$-structure on $M$. 

\begin{proposition}\label{pro:scalarcurvature}
Let $\varphi\in \Omega^3 (M)$ be a closed $\G_2^*$-structure on a 7-dimensional manifold $M,$ with torsion 2-form $\tau\in \Omega^2_{14}(M)$,
and let $g_{\varphi}$ be the pseudo-Riemannian metric on $M$ determined  by $\varphi$. Then,
the scalar curvature $\mathrm{scal}(g_{\varphi})$ of $g_{\varphi}$ is given by
\begin{equation}\label{eq:scalarcurvature}
\mathrm{scal}(g_{\varphi})=-\frac{1}{2} g_{\varphi}(\tau, \tau). 
\end{equation} 
\end{proposition}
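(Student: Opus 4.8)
The plan is to compute the scalar curvature by specializing the general $\G_2^*$-analogue of Bryant's torsion formulas. For a closed $\G_2^*$-structure the only nonvanishing torsion form is $\tau=\tau_2\in\Omega^2_{14}(M)$, so I expect the scalar curvature to be a quadratic expression purely in $\tau$, mirroring the Riemannian $\G_2$ case where Bryant obtained $\mathrm{scal}(g_\varphi)=\tfrac12|\tau|^2$ (up to sign and normalization conventions). The task is therefore to track how the split signature $(3,4)$ changes the signs.

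First I would recall, following \cite{Br}, the general expression for the Ricci tensor (or at least its trace) of a $\G_2^*$-structure in terms of the torsion forms $\tau_0,\tau_1,\tau_2,\tau_3$ from \eqref{torsion}. The scalar curvature is then a universal quadratic polynomial in these torsion forms with coefficients depending only on the algebra of $\G_2^*$. Setting $\tau_0=0$, $\tau_1=0$, $\tau_3=0$ (the closed case, by \eqref{torsionclosedG2star}) collapses this to a single term proportional to $g_\varphi(\tau,\tau)$. Concretely, I would use the standard route: compute $d\star_\varphi\varphi$ via \eqref{torsionclosedG2star}, giving $d\star_\varphi\varphi=\tau\wedge\varphi$, and then exploit the identity relating $d\tau$, the Laplacian of $\varphi$, and the intrinsic torsion to extract the Ricci contraction. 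Equivalently, one can differentiate $d\star_\varphi\varphi=\tau\wedge\varphi$ once more and pair with $\varphi$ and $\star_\varphi\varphi$ to isolate the scalar part, since the scalar curvature sits in the $\Omega^3_1$-component of $\Delta_\varphi\varphi$.

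The cleanest concrete step is to write $\Delta_\varphi\varphi$ for a closed structure: since $d\varphi=0$, we have $\Delta_\varphi\varphi=\delta d\varphi+d\delta\varphi=d\delta\varphi$, and $\delta\varphi=-\star_\varphi d\star_\varphi\varphi=-\star_\varphi(\tau\wedge\varphi)$. Using the $\G_2^*$-representation identities $\star_\varphi(\tau\wedge\varphi)=-\tau$ for $\tau\in\Omega^2_{14}$ (the analogue of $\beta\wedge\varphi=-\star_\varphi\beta$ in \eqref{def:decomposition}), one finds $\delta\varphi=\tau$ up to sign, so $\Delta_\varphi\varphi=d\tau$. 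Projecting $d\tau\in\Omega^3(M)$ onto its $\Omega^3_1$-component $f\,\varphi$ and comparing with the known general formula $(\Delta_\varphi\varphi)_1=\tfrac{?}{7}\,\mathrm{scal}(g_\varphi)\,\varphi$ yields the scalar curvature. Carrying out the same derivation with the indefinite inner product supplies the factor $-\tfrac12$ and the argument $g_\varphi(\tau,\tau)$.

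The main obstacle will be bookkeeping the signs and normalization constants forced by signature $(3,4)$: in the Riemannian $\G_2$ setting several identities carry signs that flip when $g_\varphi$ becomes indefinite, and the contraction conventions for $\star_\varphi$ on $2$-forms, together with the fact that $g_\varphi(\tau,\tau)$ need not be nonnegative (indeed the excerpt stresses $\|\tau\|_{g_\varphi}=0$ does not force $\tau=0$), must all be reconciled to land exactly on the coefficient $-\tfrac12$. I would therefore verify the constant by evaluating both sides against the explicit adapted coframe of \eqref{def G2*}–\eqref{eqn:star-varphi}, where $\star_\varphi$ acts in a fully computable way, rather than trusting a sign-by-sign transcription of the Riemannian formula.
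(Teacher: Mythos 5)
Your top-level plan coincides with what the paper actually does: the paper contains no computation for Proposition \ref{pro:scalarcurvature}, it cites Bryant's formula for closed $\G_2$-structures and asserts that his derivation ``follows step by step'' in signature $(3,4)$, which is exactly your paragraph on specializing the $\G_2^*$-analogue of the general torsion formulas and tracking signs. Moreover, the algebraic pieces you set up are correct: with the paper's conventions one indeed gets $\delta\varphi=\tau$, hence $\Delta_\varphi\varphi=d\tau$, and from $0=d(\tau\wedge\star_\varphi\varphi)=d\tau\wedge\star_\varphi\varphi+\tau\wedge d\star_\varphi\varphi$ the $\Omega^3_1$-component of $d\tau$ equals $\tfrac17 g_\varphi(\tau,\tau)\,\varphi$; these are precisely the identities the paper exploits later, in the proof of Theorem \ref{th:harmonic-Ricci}.

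The genuine gap is in your ``cleanest concrete step.'' The formula you want to compare against, $(\Delta_\varphi\varphi)_1=\tfrac{?}{7}\,\mathrm{scal}(g_\varphi)\,\varphi$, is not a known general fact, and in fact no universal constant exists: already in the Riemannian case, a nearly parallel structure ($d\varphi=\tau_0\star_\varphi\varphi$, $d\star_\varphi\varphi=0$) has $\Delta_\varphi\varphi=\tau_0^2\varphi$ and $\mathrm{scal}(g_\varphi)=\tfrac{21}{8}\tau_0^2$, giving a proportionality constant of opposite sign to the one needed in the closed case. Restricted to closed structures, the identity $(\Delta_\varphi\varphi)_1=-\tfrac27\,\mathrm{scal}(g_\varphi)\,\varphi$ is, given your wedge computation, \emph{equivalent} to the proposition you are proving, so invoking it as ``known'' is circular. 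The actual content of the proposition is Bryant's moving-frames derivation of the Ricci tensor (equivalently of the scalar-curvature--torsion identity) carried out with the indefinite fibre metric; your proposal defers exactly this to general formulas which, for $\G_2^*$, are not in the literature and are what must be established. Finally, the constant $-\tfrac12$ cannot be ``verified by evaluating both sides against the explicit adapted coframe'': $\mathrm{scal}(g_\varphi)$ involves derivatives of $\varphi$ and is not a pointwise algebraic function of coframe data, so an adapted-coframe computation can only confirm sign conventions such as $\star_\varphi(\tau\wedge\varphi)=-\tau$; pinning down the curvature constant requires either the full derivation or evaluation on explicit examples with nonvanishing torsion (such as the paper's Examples 1 and 2), and examples corroborate a universal identity but do not prove it.
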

 
The equality \eqref{eq:scalarcurvature} was obtained by Bryant for closed $\G_2$-structures  (see \cite[(4.16) of Section 4]{Br}). 
The proof of \eqref{eq:scalarcurvature} for closed $\G_2^*$-structures follows step by step the one given by Bryant 
in  \cite{Br} for closed $\G_2$-structures.  That is why we omit it here. 

\begin{theorem}\label{th:harmonic-Ricci}
Let $M$ be a 7-dimensional manifold equipped with a closed and harmonic $\G_2^*$-structure $\varphi$,
and let $g_{\varphi}$ be the pseudo-Riemannian metric on $M$ determined by $\varphi$. Then, 
the scalar curvature $\mathrm{scal}(g_{\varphi})$ of $g_{\varphi}$ vanishes. Therefore, 
if $g_{\varphi}$ is Einstein, $g_{\varphi}$ is Ricci-flat.
\end{theorem}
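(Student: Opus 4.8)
The plan is to show that harmonicity forces $g_\varphi(\tau,\tau)=0$, after which Proposition \ref{pro:scalarcurvature} immediately gives $\mathrm{scal}(g_\varphi)=0$, and the Einstein conclusion follows. Since $\varphi$ is closed, \eqref{torsionclosedG2star} tells us that the only torsion is $\tau\in\Omega^2_{14}(M)$ and that $d\star_\varphi\varphi=\tau\wedge\varphi=-\star_\varphi\tau$. The harmonicity hypothesis $\Delta_\varphi\varphi=0$ together with $d\varphi=0$ reduces to $d\delta\varphi=0$, i.e. $\delta\varphi$ is closed, where $\delta\varphi=\pm\star_\varphi d\star_\varphi\varphi$. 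First I would compute $\delta\varphi$ explicitly in terms of $\tau$: applying $\star_\varphi$ to $d\star_\varphi\varphi=-\star_\varphi\tau$ and using $\star_\varphi^2=\pm\mathrm{Id}$ on the appropriate degree (being careful with the signs coming from the signature $(3,4)$) yields $\delta\varphi=c\,\tau$ for an explicit nonzero constant $c$. Thus harmonicity is equivalent to $d\tau=0$.

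Next I would combine $d\tau=0$ with the structural identity $d\star_\varphi\varphi=\tau\wedge\varphi$. The idea is to produce the scalar $g_\varphi(\tau,\tau)$ as an integrand (or, pointwise, as a multiple of a top form) built out of $\tau$, $\varphi$ and their differentials, and then to show this top form is exact. Concretely, one expects an identity of the shape
\begin{equation*}
\tau\wedge\tau\wedge\varphi = g_\varphi(\tau,\tau)\,\mathrm{vol}_\varphi\cdot(\text{const}),
\end{equation*}
using the $\Omega^2_{14}$-characterization $\tau\wedge\varphi=-\star_\varphi\tau$ from \eqref{def:decomposition}, so that $\tau\wedge\tau\wedge\varphi=-\tau\wedge\star_\varphi\tau$, which is exactly a constant multiple of $g_\varphi(\tau,\tau)\,\mathrm{vol}_\varphi$ by definition of the induced inner product on $2$-forms. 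On the other hand, $\tau\wedge\tau\wedge\varphi=\tau\wedge d\star_\varphi\varphi$, and since $d\tau=0$ we have
\begin{equation*}
\tau\wedge d\star_\varphi\varphi = d\bigl(\tau\wedge\star_\varphi\varphi\bigr),
\end{equation*}
an exact $7$-form. Pointwise on a Lie algebra (or after integrating on a closed manifold) this forces the coefficient to vanish; but since the statement is for an arbitrary, possibly noncompact $M$, I must instead argue pointwise, so the cleaner route is to extract the pointwise scalar identity rather than integrate.

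For the pointwise argument I would avoid integration entirely and instead manipulate the algebraic identities at each point. The key is that $\delta\varphi=c\,\tau$ and the Laplacian condition $d\delta\varphi=0$ give $d\tau=0$ pointwise, while the definition of $\star_\varphi$ gives $\tau\wedge\star_\varphi\tau = g_\varphi(\tau,\tau)\,\mathrm{vol}_\varphi$. The nontrivial input is a Weitzenböck- or Bryant-type pointwise formula expressing $g_\varphi(\tau,\tau)\,\mathrm{vol}_\varphi$ as $d$ of something \emph{plus} terms that vanish under $d\tau=0$; alternatively, one uses that the scalar curvature formula \eqref{eq:scalarcurvature} was derived precisely by tracking these torsion terms, so that under the extra relation $d\tau=0$ the quantity $g_\varphi(\tau,\tau)$ is forced to vanish identically. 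The main obstacle I anticipate is the sign bookkeeping in the pseudo-Riemannian setting: because $g_\varphi$ has signature $(3,4)$, the identities $\star_\varphi^2=\pm\mathrm{Id}$ and $\tau\wedge\varphi=-\star_\varphi\tau$ carry signs that differ from the Riemannian $\G_2$ case, and a sign error would wrongly give $g_\varphi(\tau,\tau)=0$ as an identity rather than a consequence of harmonicity. I would therefore fix the signs once and for all using the explicit model forms \eqref{def G2*} and \eqref{eqn:star-varphi}, verify the pointwise identity on the standard $\G_2^*$ representation, and only then read off $\mathrm{scal}(g_\varphi)=-\tfrac12 g_\varphi(\tau,\tau)=0$; the Einstein case then gives $\Ric(g_\varphi)=\lambda g_\varphi$ with $7\lambda=\mathrm{scal}(g_\varphi)=0$, whence $\lambda=0$ and $g_\varphi$ is Ricci-flat.
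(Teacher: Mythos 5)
Your proposal follows the paper's strategy up to the decisive step, and precisely there it has a genuine gap. You correctly reduce harmonicity (given $d\varphi=0$) to $d\tau=0$, and you correctly produce the pointwise identity $\tau\wedge d\star_\varphi\varphi = \tau\wedge\tau\wedge\varphi = -\tau\wedge\star_\varphi\tau = -g_\varphi(\tau,\tau)\,\vol_\varphi$. But where you must show this $7$-form vanishes, you only establish that it is \emph{exact}: $\tau\wedge d\star_\varphi\varphi = d(\tau\wedge\star_\varphi\varphi)$ once $d\tau=0$. Exactness is not vanishing; you recognize this yourself (it is why you cannot integrate on a noncompact $M$, and in signature $(3,4)$ even integration over a compact $M$ would not give pointwise vanishing of the non-sign-definite quantity $g_\varphi(\tau,\tau)$), but the substitutes you offer --- an unspecified ``Weitzenb\"ock- or Bryant-type pointwise formula'', or an appeal to how \eqref{eq:scalarcurvature} was derived --- are not actual arguments. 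The second one is circular: \eqref{eq:scalarcurvature} holds for every closed $\G_2^*$-structure whether or not $d\tau=0$, so nothing about its derivation can force $g_\varphi(\tau,\tau)=0$.

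The missing step is elementary and already contained in the definitions you cite: by the second characterization of $\Omega^2_{14}(M)$ in \eqref{def:decomposition}, the torsion form satisfies $\tau\wedge\star_\varphi\varphi = 0$ \emph{identically} (you used only the first characterization $\tau\wedge\varphi=-\star_\varphi\tau$). Differentiating this identity gives, pointwise,
\[
0 \;=\; d(\tau\wedge\star_\varphi\varphi) \;=\; d\tau\wedge\star_\varphi\varphi + \tau\wedge d\star_\varphi\varphi \;=\; \tau\wedge d\star_\varphi\varphi,
\]
the last equality using $d\tau=0$. Combined with your identity $\tau\wedge d\star_\varphi\varphi = -g_\varphi(\tau,\tau)\,\vol_\varphi$, this yields $g_\varphi(\tau,\tau)=0$ at every point, hence $\mathrm{scal}(g_\varphi)=0$ by \eqref{eq:scalarcurvature}, and the Einstein case follows by tracing, as you say. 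This is exactly the paper's proof: your outline has all the right pieces, but the key observation --- that the primitive $\tau\wedge\star_\varphi\varphi$ is not merely some form whose differential you cannot control, but is identically zero --- is the idea you are missing, and no Weitzenb\"ock machinery is needed once you have it.
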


\begin{proof}
By Proposition \ref{pro:scalarcurvature}, $\mathrm{scal}(g_{\varphi}) = 0$ if and only if 
$g_{\varphi}(\tau, \tau)=0$. To prove that $g_{\varphi}(\tau, \tau)=0$ we proceed as follows. 
From \eqref{torsionclosedG2star} we know that $d \star_{\varphi} \varphi=  - \star_{\varphi} \tau$, and so $\tau = - \star_{\varphi} d \star_{\varphi} \varphi$. Then, using that
$\varphi$ is closed and harmonic, we have that $\tau$ is closed. In fact, 
\begin{equation*}
d\tau =  - d\star_{\varphi} d\star_{\varphi} \varphi = d\delta \varphi = \Delta_{\varphi} \varphi = 0.
\end{equation*}
On the other hand,
\begin{equation}\label{eqn:norm}
g_{\varphi}(\tau,\tau)\,\vol_{\varphi} = \tau \wedge \star_{\varphi} \tau = - \tau \wedge d \star_{\varphi} \varphi ,
\end{equation} 
where in the last equality we use \eqref{torsionclosedG2star}. But by \eqref{def:decomposition}, $\tau \wedge \star_{\varphi} \varphi =0$ since $\tau\in \Omega^2_{14}(M)$. 
So, taking into account that $\tau$ is closed, we have
$$
0 = d(\tau \wedge \star_{\varphi} \varphi) = d\tau \wedge \star_{\varphi} \varphi + \tau \wedge d\star_{\varphi} \varphi =\tau \wedge d\star_{\varphi} \varphi.
$$
Therefore, $\tau \wedge d\star_{\varphi} \varphi=0$, and so  $g_{\varphi}(\tau,\tau)=0$ by \eqref{eqn:norm}. Hence, 
$\mathrm{scal}(g_{\varphi}) = 0$ by \eqref{eq:scalarcurvature}.

Let us now suppose that $g_{\varphi}$ is Einstein. Then, $g_{\varphi}$ is Ricci-flat
since an Einstein pseudo-Riemannian metric on $M$, with zero scalar curvature, is Ricci-flat.
This completes the proof of the theorem.
\end{proof}

Next, we show two examples of 7-dimensional nilpotent Lie algebras, each of them equipped with a closed 
and harmonic $\G_2^*$-structure, which is not coclosed. In the first example, the induced pseudo-metric by the $\G_2^*$-structure
is not Einstein, while in the second example the induced pseudo-metric is Ricci-flat and non-flat.

\subsection{Example 1}
Let $\frg$ be the 7-dimensional nilpotent Lie algebra defined by \eqref{def:Lie}. Let us consider the
closed $\G_2^*$-structure $\varphi$ given by \eqref{def:G2*form}. 
We have that $\varphi$ is non-coclosed. In fact, one can check that
$\star_\varphi(\varphi)=\frac{1}{2} e^{1236} - e^{1256} - e^{1267} + \frac{1}{2} e^{1346} + e^{1356} -2 e^{1357} + e^{1456} -\frac{1}{2} e^{2346} -\frac{1}{2} e^{2347} - e^{2356} + e^{2357} + e^{2456} - e^{2457}$,
and 
$$
d(\star_\varphi(\varphi))= -\frac{7}{2} e^{12345} -\frac{3}{2} e^{12346} + e^{12347} + 2 e^{12356} - e^{12357} - e^{12456} \not=0.
$$
Hence, $\varphi$ is not coclosed.  However, $\varphi$ is harmonic. In fact, since $\varphi$ is closed, we have
$\Delta_{\varphi}\varphi= - d(\star_{\varphi}d(\star_{\varphi}\varphi))$. But, the 2-form $\star_{\varphi}(d(\star_{\varphi}\varphi))$ is given by
$$
\star_{\varphi}(d(\star_{\varphi}\varphi)) = \frac{5}{2} e^{12} -\frac{3}{2} e^{13} + e^{14} + e^{15} - e^{23},
$$
which is clearly closed by \eqref{def:Lie}. Thus, $\Delta_{\varphi}\varphi= 0$.

Let $g_{\varphi}$ be the pseudo-metric associated to $\varphi$. Since $\varphi$ is a closed and harmonic $\G_2^*$-structure on $\frg$,
from Theorem \ref{th:harmonic-Ricci} we know that the scalar curvature $\mathrm{scal}(g_{\varphi})$
of $g_\varphi$ vanishes. Note that if  $\tau$  is the torsion 2-form of $\varphi$, 
we have $\tau = - \star_{\varphi}(d(\star_{\varphi}\varphi)) = - \big(\frac{5}{2} e^{12} -\frac{3}{2} e^{13} + e^{14} + e^{15} - e^{23}\big)$ and 
$\star_{\varphi} \tau= - d(\star_\varphi(\varphi)) = \frac{7}{2} e^{12345} + \frac{3}{2} e^{12346} - e^{12347} - 2 e^{12356} + e^{12357} + e^{12456}$, and so
$\tau \wedge \star_{\varphi} \tau =0$ or, equivalently, $\mathrm{scal}(g_{\varphi}) = 0$ by \eqref{eq:scalarcurvature}. 

Let $\{h_1, \ldots, h_7\}$ be 
the frame of $\frg$  dual to the coframe $\{h^1, \ldots, h^7\}$ defined in \eqref{def:newbasis}. 
Then,  with respect to the $g_{\varphi}$-orthonormal frame $\{h_1, \ldots, h_7\}$, 
we have $g_{\varphi} = \mathrm{diag}(-1,-1,-1,-1,1,1,1)$, while
\begin{equation*}
\Ric(g_{\varphi}) = \begin{pmatrix}
 0 & 0 & 0 & 0 & 0 & 0 & 0 \\
 0 & \frac{1}{4} & 0 & 0 & 0 & -\frac{1}{4} & 0 \\
 0 & 0 & 0 & 0 & 0 & 0 & 0 \\
 0 & 0 & 0 & -\frac{1}{2} & 0 & 0 & \frac{1}{2} \\
 0 & 0 & 0 & 0 & 0 & 0 & 0 \\
 0 & -\frac{1}{4} & 0 & 0 & 0 & \frac{1}{4} & 0 \\
 0 & 0 & 0 & \frac{1}{2} & 0 & 0 & -\frac{1}{2} \\
\end{pmatrix}.
\end{equation*}
Thus, $g_\varphi$ is not Einstein. We can show again that $\mathrm{scal}(g_{\varphi})=0$.  Indeed,
 $\mathrm{scal}(g_{\varphi}) = \sum_{k =1}^7 \, g_{\varphi}^{kk}\, \Ric(g_{\varphi})_{kk} =0$, 
where $g_{\varphi}^{-1} = (g_{\varphi}^{ij})$ is the inverse matrix of $g_\varphi$, so $g_{\varphi}^{-1} = \mathrm{diag}(-1,-1,-1,-1,1,1,1)$.

\subsection{Example 2} Let $\frn$ be the nilpotent Lie algebra of dimension 7 with structure equations
\begin{equation}\label{def:lie-n}
 (0, \, 0, \, e^{12}, \, 0, \, 0, \, e^{13}+e^{24}, \, e^{15}).
\end{equation}
Let us consider the coframe $\{x^1,\ldots,x^7\}$ of $\frn^*$ given by
\begin{equation}\label{def:newbasis-n}
\begin{aligned}
& x^1 = - e^7, \quad x^2=e^1-\tfrac{1}{2}\,e^6, \quad  x^3 =e^3-\tfrac{1}{2}\,e^4, \quad x^4=  \tfrac{1}{4}\,e^2-e^5,\\
& x^5= -e^3-\tfrac{1}{2}\,e^4, \quad  x^6 = \tfrac{1}{4}\,e^2+e^5, \quad  x^7=\e^1+\tfrac{1}{2}\,e^6.
\end{aligned}
\end{equation}
We define the $\G_2^*$-structure $\psi$ on $\frn$ for which $\{x^1,\ldots,x^7\}$ is an adapted coframe, i.e.
\begin{equation}\label{def:psi}
\begin{split}
\psi &= - x^{127} - x^{347} + x^{567} + x^{135} - x^{146} - x^{236}  - x^{245}\\
     &= e^{123}+\frac{1}{2} e^{257}+e^{167}+e^{347}-e^{456}
\end{split}
\end{equation}
This form $\psi$ is closed. In fact, $d(e^{123})=0$, $d(e^{257})=0$ and $d( e^{167} + e^{347}  - e^{456}) =0$ by \eqref{def:lie-n}. Thus $\psi$ defines a closed $\G_2^*$-structure on $\frn$.

Using \eqref{eqn:star-varphi} and \eqref{def:newbasis-n}, we have
\begin{equation}\label{exp:starpsi}
\begin{aligned}
\star_{\psi}\psi&= x^{1234}-x^{1256}-x^{3456}-x^{2467}+x^{2357}+x^{1457}+x^{1367} \\
&=- \tfrac{1}{2}e^{1256}- e^{1237}-e^{1346}-\tfrac{1}{2}e^{2345}-e^{4567}.
\end{aligned}
\end{equation}
Thus, $d(\star_\psi\psi)= - e^{13457} = \frac14\left(-x^{12345}-x^{12356}+x^{13457}+x^{13567}\right) \not=0.$
This means that $\psi$ is non-coclosed.  However, $\psi$ is harmonic. In fact,
\[
\star_\psi d(\star_\psi\psi)=\frac14\left(-x^{24}+x^{26}+x^{47}-x^{67}\right)=e^{15}.
\] 
But, since $\psi$ is closed, 
\begin{equation*}
\Delta_{\psi}\psi= - d(\star_{\psi}d(\star_{\psi}\psi))= - d(e^{15}) = 0,
\end{equation*}
i.e. $\psi$ is harmonic. Now, because $\psi$ is a closed and harmonic $\G_2^*$-structure on $\frn$,
from Theorem \ref{th:harmonic-Ricci} we know that the scalar curvature $\mathrm{scal}(g_{\psi})$
of $g_\psi$ vanishes. Note that if $\widetilde{\tau}$  is the torsion 2-form of $\psi$, 
we have $\widetilde{\tau} = - \star_{\psi}(d(\star_{\psi}\psi)) = - e^{15}$ and 
$\star_{\psi} \widetilde{\tau}= - d(\star_\psi(\psi)) = e^{13457}$, and so
$\widetilde{\tau} \wedge \star_{\psi} \widetilde{\tau} =0$ or, equivalently, $\mathrm{scal}(g_{\psi}) = 0$ by \eqref{eq:scalarcurvature}. 

Let $\{x_1, \ldots, x_7\}$ be the frame of $\frn$  dual to the coframe $\{x^1, \ldots, x^7\}$ defined in \eqref{def:newbasis-n}. 
Then, with respect to the $g_{\psi}$-orthonormal frame $\{x_1, \ldots, x_7\}$, 
we have $g_{\psi} = \mathrm{diag}(-1,-1,-1,-1,1,1,1)$. Let $R_{\psi}$ be the curvature tensor of the Levi-Civita connection of
$g_{\psi}$. We obtain 
$R_{\psi}(x_p, x_q, x_r, x_s) =0$,  excepting 
\begin{equation}\label{eqn:curvature-2}
\begin{aligned}
&R_{\psi}(x_2, x_4, x_2, x_4) = - R_{\psi}(x_2, x_4, x_2, x_6) = - R_{\psi}(x_2, x_4, x_4, x_7) \\ 
& \qquad \qquad \qquad  \quad \,\, = R_{\psi}(x_2, x_4, x_6, x_7) = R_{\psi}(x_2, x_6, x_2, x_6) \\
& \qquad \qquad \qquad  \quad \,\, = R_{\psi}(x_2, x_6, x_4, x_7) = - R_{\psi}(x_2, x_6, x_6, x_7) \\ 
& \qquad \qquad \qquad  \quad \,\,  = R_{\psi}(x_4, x_7, x_4, x_7) = - R_{\psi}(x_4, x_7, x_6, x_7) \\
& \qquad \qquad \qquad  \quad \,\,  = R_{\psi}(x_6, x_7, x_6, x_7) = - \frac{3}{64},
\end{aligned}
\end{equation}
and those which one gets by the symmetries of $R_{\psi}$. Taking into account that,
with respect to the frame $\{x_1, \ldots, x_7\}$, the matrix
 $g_{\psi} = \mathrm{diag}(-1,-1,-1,-1,1,1,1)$ is diagonal, 
we have that $\Ric(g_\psi)_{ij}=\sum_{k = 1}^7 \, g_\psi^{kk}(R_{\psi})_{ikjk}$. 
Now, using \eqref{eqn:curvature-2}, we obtain $\Ric(g_\psi)_{ij} =0$ $(1\leq i, j \leq 7)$, i.e. 
$\Ric(g_\psi) =0$. Hence, $g_\psi$ is Ricci-flat but not flat.

\subsection*{Acknowledgements}
We are grateful to Diego Conti and Anna Fino for useful comments. The first and third authors were partially supported 
by the Basque Government Grant IT1094-16 and by the Grant PGC2018-098409-B-100 of 
the Ministerio de Ciencia, Innovaci\'on y Universidades of Spain.
The second author was partially supported by a \emph{For\-schungs\-stipendium} (FR 3473/2-1) from the Deutsche Forschungsgemeinschaft (DFG).

\end{document}